\theoremstyle{plain}    
\newtheorem{theorem}{Theorem}[section]
\newtheorem{proposition}[theorem]{Proposition}
\newtheorem{lemma}[theorem]{Lemma}
\newtheorem{corollary}[theorem]{Corollary}
\theoremstyle{definition}
\newtheorem{definition}[theorem]{Definition} 
\theoremstyle{remark}
\newtheorem{remark}[theorem]{Remark}
\newtheorem{example}[theorem]{Example}
\numberwithin{equation}{section} 
\numberwithin{figure}{section} 
\numberwithin{theorem}{section}
\begin{document}
\begin{center}
{\LARGE{\bf Sensitivity relations for the Mayer problem with differential inclusions} }
\
\vskip0.5\baselineskip{\bf P. Cannarsa$^{1}$, H. Frankowska$^{2,3,4}$, T. Scarinci$^{1,3,4}$ }
\vskip0.5\baselineskip{\em$^{1}$ Dipartimento di Matematica,
Universit\`a di Roma 'Tor Vergata', 
Via della Ricerca Scientifica 1, 00133 Roma, Italy \\
$^{2}$CNRS, UMR 7586,  F-75005, Paris, France \\
$^{3}$Sorbonne Universit\'es, UPMC Univ Paris 06, UMR 7586, IMJ-PRG, case 247,  4 place Jussieu,  F-75005, Paris, France\\
$^4$Univ Paris-Diderot, Sorbonne Paris Cit\'e, UMR 7586, F-75013, Paris, France}
\end{center}
\section*{Abstract}
In optimal control,
 sensitivity relations are usually understood as inclusions that identify the pair formed by the dual arc and the Hamiltonian, evaluated along the associated minimizing trajectory, as a suitable generalized gradient of the value function.
In this paper, sensitivity relations are  obtained for the Mayer problem associated with the differential inclusion $\dot x\in F(x)$ and applied to derive  optimality conditions.
Our first  application concerns the maximum principle and consists in showing that a dual arc can be constructed for {\em every} element of the superdifferential of the final cost. 
As our second application, with every nonzero limiting gradient of the value function at some point $(t,x)$ we  associate a family of optimal trajectories at $(t,x)$ with the property that families corresponding to distinct limiting gradients have {\em empty} intersection.
\section{Introduction} 
Given a complete separable metric space $U$ and a vector field $f:\mathbb R^n\times U\to\mathbb R^n$,  smooth with respect to $x$, for any point $(t_0,x_0)\in [0,T]\times\mathbb R^n$ and  Lebesgue measurable map $u:[t_0,T]\to U$ let us denote by $x(\cdot;t_0,x_0,u)$ the solution of the Cauchy problem
\begin{equation}\label{ControlCase}
\begin{cases}
\hspace{.cm}
\dot x(t)=f(x(t),u(t))
&t\in [t_0,T]\;\text{a.e.}
\\
\hspace{.cm}
x(t_0)=x_0,
\end{cases}
\end{equation}
that we suppose to exist on the whole interval $[t_0,T]$. Then, given a smooth function $\phi:\mathbb R^n\to\mathbb R$, we are interested in minimizing
the final cost $\phi(x(T;t_0,x_0,u))$ over all controls $u$. 

In the Dynamic Programming approach to such a problem, one seeks to characterize the value function $V$, that is,
\begin{equation}\label{eq:introV}
V(t_0,x_0)= \inf_{u(\cdot)} \phi(x(T;t_0,x_0,u(\cdot)))\qquad(t_0,x_0)\in [0,T]\times\mathbb R^n,
\end{equation}
as the unique solution, in a suitable sense, of the Hamilton-Jacobi equation
\begin{equation*}
\label{HJBeq}
\begin{cases}
\hspace{.cm}
-\partial_t v(t,x) + H(x,-v_x (t,x) )=0&
 \mbox{ in } (0,T)\times \mathbb{R}^n
\\
\hspace{.cm}
\hspace{.3cm}v(T,x)=\phi(x)
&
x\in \mathbb{R}^n,
\end{cases}
\end{equation*}
where the Hamiltonian $H$ is defined as
\begin{equation*}
H(x,p)=\sup_{u\in U}\langle p,f(x,u)\rangle\qquad(x,p)\in \mathbb R^n\times \mathbb R^n.
\end{equation*}
Now, the classical method of characteristics ensures that, as long as $V$ is smooth, along any solution of the system of ODEs
\begin{equation}\label{eq:char}
\begin{cases}
\hspace{.cm}
\hspace{.3cm}\dot x(t)=\nabla_p H(x(t),p(t))
,&
x(T)=z
\\
\hspace{.cm}
-\dot p(t)=\nabla_xH(x(t),p(t))
,&
p(T)=-\nabla\phi(z)
\end{cases}
\quad t\in[0,T],
\end{equation}
the gradient of $V$ satisfies
\begin{equation}\label{eq:introSR1}
(H(x(t),p(t)),-p(t))=\nabla V(t,x(t)), \quad \forall\ t\in[0,T].
\end{equation}
It is well known that the characteristic system \eqref{eq:char} is also a set of necessary optimality conditions for any optimal solution $x(\cdot)$ of the Mayer problem \eqref{eq:introV}. The mapping $p(\cdot)$ is called a dual arc. Observe that $\nabla V (t,x)$ allows to ``measure'' sensitivity of the optimal cost with respect to $(t,x)$. For this reason, \eqref{eq:introSR1} is called a \emph{sensitivity relation} for problem \eqref{eq:introV}.
Obviously, the above calculation is just formal because, in general,   $V$ cannot be expected to be smooth. On the other hand, relation \eqref{eq:introSR1} is important for deriving sufficient optimality conditions, as we recall in Section 2. This fact motivates interest  in generalized sensitivity relations for nonsmooth value functions.

To the best of our knowledge, the first ``nonsmooth result'' in the above direction was obtained by  Clarke and Vinter in \cite{Clarke:1987:RMP:35498.35509} for the Bolza  problem, where,
given an optimal trajectory $x_0(\cdot)$, an associated dual arc $p(\cdot)$ is constructed to satisfy the partial sensitivity relation 
\begin{equation} \label{IntroSensParti}
-p(t)\in \partial_x V(t, x_0(t)) \textit{ a.e. } t\in[t_0,T].
\end{equation}
Here, $\partial_x V$ denotes  Clarke's generalized   gradient of $V$ in the second variable. Subsequently, 
 for the same problem, Vinter~\cite{MR923279} proved the existence of a dual arc satisfying the full sensitivity relation
\begin{equation}\label{IntroSensTotal}
\left( H(x_0(t),p(t)), -p(t) \right)\in \partial V(t,x_0(t))\mbox{ for all } t\in [t_0,T],
\end{equation}
with $\partial V$ equal to  Clarke's generalized  gradient in $(t,x)$.

Full sensitivity relations were recognized as necessary and sufficient conditions for optimality in \cite{Cannarsa:1991:COT:120771.118946}, where the first two authors of this paper studied the Mayer problem for the parameterized control system \eqref{ControlCase}, with $f$  depending also on time. More precisely, replacing the Clarke generalized gradient with the Fr\'echet superdifferential $\partial^+ V$, the full sensitivity relation
\begin{equation}
\left( H(t,x_0(t),p(t)), -p(t) \right)\in \partial^+ V(t,x_0(t))\quad \mbox{ a.e. } t\in [t_0,T],
\end{equation}  
together with the maximum principle
\begin{equation}
\langle p(t),\dot{x}_0(t)\rangle = H(t,x_0(t),p(t))\quad \mbox{ a.e. } t\in [t_0,T],
\end{equation}
was shown to actually characterize optimal trajectories. A similar result was proved in \cite{MR894990}, under stronger regularity assumptions.

Following the above papers, the analysis has been extended in several directions. For instance, in \cite{MR1780579},  sensitivity relations were adapted to the minimum time problem for the parameterized control system 
\begin{equation}
\dot{x}(t)=f(x(t),u(t)) \quad t \geq 0,
\end{equation}
taking the form of the inclusion
\begin{equation}\label{ForMinTime}
-p(t)\in \partial^+ T(x_0(t)) \mbox{ for all } t\in [0,T(x_0)),
\end{equation}
where $T(\cdot)$ denotes the minimum time function for a target $K$, and $x_0(\cdot)$ an optimal trajectory starting from $x_0$ which attains $K$ at time $T(x_0)$. 
In \cite{MR3005035}, the above result has been  extended to nonparameterized systems described by  differential inclusions.
As for optimal control problems with state constraints, sensitivity relations were derived in
 \cite{BFV} and
 \cite{frankowska:hal-00800199}
using a suitable relaxation of the limiting subdifferential of the value function.

Deriving sensitivity relations in terms of the Fr\'echet and/or proximal superdifferential of the value function for the differential inclusion
\begin{equation}\label{May1}
\dot{x}(s)\in F(x(s))\quad \mbox{  a.e. } s \in [t_0,T],
\end{equation}
with the initial condition
\begin{equation}\label{May2}
x(t_0)=x_0,
\end{equation}
is far from straightforward, when  $F$ cannot be parameterized as
\begin{equation*}
F(x)=\{f(x,u)~:~u\in U\}
\end{equation*}
with $f$ smooth in $x$. The main goal of the present work is to obtain both partial and full sensitivity relations for the Mayer problem
\begin{equation}\label{Mayer}
\inf \phi(x(T)),
\end{equation}
the infimum being taken over all absolutely continuous arcs $x:[t_0,T]\rightarrow \mathbb{R}^n$ that satisfy \eqref{May1}-\eqref{May2}. The main assumptions we impose on the data,  expressed in terms of the Hamiltonian 
\begin{equation}\label{H}
H(x,p)= \sup_{v \in F(x)} \langle v,p\rangle \qquad (x,p)\in \mathbb{R}^n\times \mathbb{R}^n ,
\end{equation}  
require  $H(\cdot,p)$ to be semiconvex,  $H(x,\cdot)$ to be differentiable for $p\neq 0$, and $\nabla_pH(\cdot, p)$ locally Lipschitz continuous.  We refer the reader to \cite{MR2728465}, where this set of  hypotheses was  used to obtain the semiconcavity of the value function, for a detailed discussion of their role in lack of  a smooth parameterization of $F$.

For the Mayer problem \eqref{Mayer}, we shall derive sensitivity relations like \eqref{IntroSensParti} and \eqref{IntroSensTotal} for both the proximal and Fr\'echet superdifferential of the value function. More precisely, 
let  $\overline{x}:[t_0,T]\rightarrow \mathbb{R}^n$ be an optimal trajectory of  problem \eqref{Mayer} and let 
$\overline{p}:[t_0,T]\rightarrow \mathbb{R}^n$ be any arc satisfying 
\begin{equation}\label{intro:DualArc4}
\begin{cases}
\hspace{.cm}
-\dot{\overline p}(t)\in\partial_x^- H(\overline{x}(t),\overline p(t))
\\
\hspace{.3cm}
\dot{\overline{x}}(t)\in \partial_p H(\overline{x}(t),\overline p(t)) 
\end{cases}
\quad \mbox{ a.e. in } \left[ t_0, T \right],
\end{equation}
and 
\begin{equation}\label{intro:dualarcStarAAA}
-\overline{p}(T)\in \partial^{+,pr} \phi(\overline x(T)),
\end{equation}
where $\partial^{+,pr} \phi$ denotes the proximal superdifferential of $\phi$, and $\partial_x^- H$ denotes the Fr\'echet subdifferential of $H$ with respect to $x$. 
\footnote{We will see that solutions $(\overline{x}(\cdot),\overline{p}(\cdot)$) of \eqref{intro:DualArc4} are in the set of differentiability of the map $H(x,\cdot)$ when $p(T)\neq 0$. In that case, the second equation of \eqref{intro:DualArc4} becomes $\dot{\overline{x}}(t)=\nabla_p H (\overline{x}(t),\overline{p}(t))$ for all $t\in [t_0,T]$.}
Then we show that $\overline{p}(\cdot)$ satisfies the proximal partial sensitivity relation 
\begin{equation}\label{intro:PPSR}
-\overline{p}(t)\in \partial_x^{+,pr}V(t,\overline{x}(t))\mbox{ for all }t\in[t_0,T].
\end{equation}
Moreover, replacing $\partial^{+,pr} \phi(\overline x(T))$ by the Fr\'echet superdifferential $\partial^{+} \phi(\overline x(T))$ in \eqref{intro:dualarcStarAAA}, we derive the full sensitivity relation
\begin{equation}\label{intro:NuovaInclu}
\left( H(\overline{x}(t),\overline{p}(t)),-\overline{p}(t)\right)\in  \partial^+ V(t,\overline{x}(t)) \mbox{ for all } t\in (t_0,T).
\end{equation}
Thanks to \eqref{intro:NuovaInclu} we can recover, under suitable assumptions, the same set of necessary and sufficient conditions for optimality that appears in the context of smooth parameterized systems.

From a technical viewpoint, we note that the proof of \eqref{intro:PPSR} and \eqref{intro:NuovaInclu} is entirely different from the one which is used for parameterized control systems. Indeed, in the latter case, the conclusion is obtained appealing to the variational equation of \eqref{ControlCase}. In the present context, such a strategy is impossible to follow because $F$ admits no smooth parameterization, in general. As in \cite{MR2728465}, the role of the variational equation is here played by the maximum principle.

After obtaining sensitivity relations, we discuss two applications of \eqref{intro:PPSR} to the Mayer problem. Our first application is concerned with optimality conditions. Under our assumptions on $H$, the maximum principle in its available forms associates, with any optimal trajectory  $\overline{x}:[t_0,T]\rightarrow \mathbb{R}^n$ of  problem \eqref{Mayer}, a dual arc 
$\overline{p}:[t_0,T]\rightarrow \mathbb{R}^n$ such that $(\overline x,\overline p)$ satisfies  \eqref{intro:DualArc4}
and the transversality condition 
\begin{equation}\label{intro:dualarcStar4}
-\overline{p}(T)\in \partial \phi(\overline x(T)),
\end{equation}
see, for instance, \cite{MR709590}. Here, for $F$ locally strongly convex, we show how to construct multiple dual arcs $p(\cdot)$ satisfying the maximum principle
\begin{equation}\label{intro:MaxPrinc4}
H(\overline{x}(t),p(t))= \langle  p(t), \dot{\overline{x}}(t) \rangle\quad \mbox{ a.e. in } [t_0,T],
\end{equation}
by solving, for any $q \in \partial^{+,pr} \phi(\overline{x}(T))$, the terminal value problem
\begin{equation*}
\left\{\begin{array}{l}
-\dot{p}(s) \in \partial_x^- H(\overline{x}(s),p(s))\quad \mbox{ a.e. in } s\in \left[ t_0, T \right], \\
-p(T)=q.
\end{array}\right.
\end{equation*}
  
Our second application aims to clarify the connection between the limiting gradients of $V$ at some point $(t,x)$, $\partial^*V(t,x)$, and the optimal trajectories at $(t,x)$. When the control system is parameterized as in \eqref{ControlCase}, such a connection is fairly well understood: one can show that any 
nonzero limiting gradient of $V$ at $(t,x)$  can be associated with an optimal trajectory  starting from $(t,x)$, and the map from  $\partial^*V(t,x)\setminus \{0\}$ into the family of optimal trajectories is one-to-one (see \cite[Theorem~7.3.10]{MR2041617}). In this paper, we use a suitable version
of \eqref{intro:PPSR} to prove an analogue of the above result (Theorem~\ref{Strano}) which takes inot account the lack of uniqueness for the initial value problem \eqref{intro:DualArc4}-\eqref{intro:dualarcStarAAA}.
%
%
%

This paper is organized as follows. In Section $2$, we set our notation, introduce the main assumptions of the paper, and recall preliminary results from nonsmooth analysis and control theory. In Section $3$, sensitivity relations are derived in terms of the proximal and Fr\'echet superdifferentials. Finally, an application to the maximum principle is obtained in Section $4$, and a result connecting limiting gradients of $V$ with  optimal trajectories in Section $5$.


\section{Preliminaries}
\subsection{Notation}
Let us start by listing various basic notations and quickly reviewing some general facts for future use. Standard references are \cite{MR2041617,MR709590}. \\

We denote by $\mathbb{R}^+$ the set of strictly positive real numbers, by $|\cdot|$ the Euclidean norm in $\mathbb{R}^n$, and by $\langle \cdot,\cdot \rangle$ the inner product. $B(x,\epsilon)$ is the closed ball of radius $\epsilon > 0$ and center $x$. $\partial E$ is the boundary of a subset $E$ of $\mathbb{R}^n$.\\ For any continuous function $f : [t_0,t_1] \rightarrow \mathbb{R}^n$, let $\|f\|_{\infty} = \max_{t\in[t_0,t_1]} |f(t)|$. When $f$ is Lebesgue integrable, let $\|f\|_{\mathcal{L}^1([t_0,t_1])} = \int_{t_0}^{t_1} |f(t)|\ d t$. $W^{1,1}\left([t_0,T]; \mathbb{R}^n\right)$ is the set of all absolutely continuous functions $x:[t_0,T] \rightarrow \mathbb{R}^n$.\\
Consider now a real-valued function $f: \Omega \subset \mathbb{R}^n \rightarrow \mathbb{R}$, where $\Omega$ is an open set, and suppose that $f$ is locally Lipschitz. We denote by $\nabla f(\cdot)$ its gradient, which exists a.e. in $\Omega$. A vector $\zeta$ is in the \emph{reachable gradient} $\partial^* f(x)$ of $f$ at $x\in \Omega$ if there exists a sequence $\lbrace x_i \rbrace \subset \Omega$ such that $f$ is differentiable at $x_i$ for all $i \in \mathbb{N}$ and 
$$ x=\lim_{i \rightarrow \infty} x_i, \quad \zeta = \lim_{ i \rightarrow \infty} \nabla f(x_i).$$
Furthermore, the \emph{(Clarke's) generalized gradient} of $f$ at $x\in\Omega$, denoted by $\partial f(x)$, is the set of all the vectors $\zeta$ such that
\begin{equation}\label{ClarkeGrad}
\langle \zeta , v \rangle \leq \limsup_{
\begin{tiny}
\begin{array}{l} 
y \rightarrow x,\\ 
h \rightarrow 0^+
\end{array}
\end{tiny} 
} \frac{f(y+h v )- f(y) }{ h},\quad \forall v \in \mathbb{R}^n.
\end{equation}
It is known that $co( \partial^* f(x))=\partial f (x)$, where $co(A)$ denotes the convex hull of a subset $A$ of $\mathbb{R}^n$.\\ 
Let $f:\Omega \subset\mathbb{R}^n \rightarrow \mathbb{R}$ be any real-valued function defined on a open set $\Omega\subset\mathbb{R}^n$. For any $x\in\Omega$, the sets 
\[
\partial^- f(x)=\left\lbrace p\in\mathbb{R}^n : \liminf_{y \rightarrow x} \frac{f(y)-f(x)-\langle p, y - x\rangle  }{\mid y - x \mid}\geq 0 \right\rbrace,
\]
\[
\partial^+ f(x)=\left\lbrace p\in\mathbb{R}^n : \limsup_{y \rightarrow x} \frac{f(y)-f(x)-\langle p, y - x\rangle  }{\mid y - x \mid}\leq 0 \right\rbrace
\]
are called the \emph{(Fr\'echet) subdifferential} and \emph{superdifferential} of $f$ at $x$, respectively. A vector $p \in \mathbb{R}^n$ is a \emph{proximal supergradient} of $f$ at $x\in\Omega$ if there exist two constants $c,\rho \geq 0$ such that 
$$ f(y)-f(x)-\langle p, y-x \rangle \leq c \vert y-x \vert^2,\  \forall y \in B(x,\rho).$$ 
The set of all proximal supergradients of $f$ at $x$ is called the \emph{proximal superdifferential} of $f$ at $x$, and is denoted by $\partial^+_{pr} f(x)$. Note that $\partial^+_{pr} f(x)$ is a subset of the Fr\'echet superdifferential of $f$ at $x$.\\ 
For a mapping $f : \mathbb{R}^n \times \mathbb{R}^m \rightarrow \mathbb{R}$, associating to each $x \in \mathbb{R}^n$ and $y\in \mathbb{R}^m$ a real number, $\nabla_x f$, $\nabla_y f$ are its partial derivatives (when they do exist). The partial generalized gradient or partial Fr\'echet/proximal sub/superdifferential will be denoted in a similar way.\\ 
Let $\Omega$ be an open subset of $\mathbb{R}^n$. $C^1(\Omega)$ and $C^{1,1}(\Omega)$ are the spaces of all the functions with continuous and Lipschitz continuous first order derivatives on $\Omega$, respectively. \\ 
Let $K \subset \mathbb{R}^n$ be a convex set. For $\overline{v}\in K$, recall that the \emph{normal cone to $K$ at $\overline{v}$} (in the sense of convex analysis) is the set 
$$N_K(\overline{v})= \lbrace p\in \mathbb{R}^n: \langle p, v-\overline{v}\rangle\leq 0, \forall v\in K \rbrace.$$ 
A well-known separation theorem implies that the normal cone at any $v \in \partial K$ contains a half line. Moreover, if $K$ is not a singleton and has a $C^1$ boundary when $n>1$, then all normal cones at the boundary points of $K$ are half lines.\\ Finally, recall that a set-valued map $F:X \rightrightarrows Y$ is \emph{strongly injective} if $F(x)\cap F(y)=\emptyset$ for any two distinct points $x, y \in X$.
\subsection{Locally semiconcave functions}
Here, we recall the notion of semiconcave function in $\mathbb{R}^n$ and list some results useful in this paper. Further details may be found, for instance, in \cite{MR2041617}.\\

We write $[x, y]$ to denote the segment with endpoints $x, y$ for any $x, y \in \mathbb{R}^n$. 
\begin{definition}
Let $A\subset \mathbb{R}^n$ be an open set. We say that a function $u : A\rightarrow \mathbb{R} $ is (linearly) \emph{semiconcave} if it is continuous in $A$ and there exists a constant $c$ such that
\[ u(x + h) + u(x - h) - 2 u(x) \leq c | h |^2, \]
for all $x, h\in \mathbb{R}^n$ such that $[ x - h, x + h] \subset A$. The constant $c$ above is called a semiconcavity constant for $u$ in $A$. We denote by $SC(A)$ the set of functions which are semiconcave in $A$. We say that a function $u$ is semiconvex on $A$ if and only if $-u$ is semiconcave on $A$.
\end{definition}\label{PropSemiconc}
Finally recall that $u$ is locally semiconcave in $A$ if for each $x\in A$ there exists an open neighborhood of $x$ where $u$ is semiconcave.\\
In the literature, semiconcave functions are sometimes defined in a more general way. However, in the sequel we will mainly use the previous definition and properties recalled in the following proposition.
\begin{proposition}
Let $A\subset \mathbb{R}^n$ be an open set, let $u : A \rightarrow \mathbb{R}$ be a semiconcave function with a constant of semiconcavity $c$, and let $x \in A$. Then, 
\begin{enumerate}
\item a vector $p\in \mathbb{R}^n$ belongs to $\partial u(x)$ if and only if
\begin{equation}\label{Booo}
u(y) - u(x) - \langle p, y - x \rangle  \leq c |y - x|^2 
\end{equation}
for any point $y \in A$ such that $[y, x] \subset A$. Consequently, $\partial^{+} u(x)=\partial^{+,pr} u(x)$.
\item $\partial u(x) = \partial^{+} u(x)=co( \partial^{\ast} u(x))$.
\item If $\partial^+ u(x)$ is a singleton, then $u$ is differentiable at $x$.
\end{enumerate}
\end{proposition}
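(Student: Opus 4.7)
The plan is to prove the three parts in order, with (2) and (3) following from the characterization established in (1).

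For (1), the ``if'' direction is immediate: dividing $u(y)-u(x)-\langle p,y-x\rangle\le c|y-x|^2$ by $|y-x|$ and taking $\limsup$ as $y\to x$ yields $p\in\partial^+u(x)$, hence $p\in\partial u(x)$ by the general inclusion $\partial^+u(x)\subset\partial u(x)$ for locally Lipschitz functions; the same bound is exactly the defining condition of a proximal supergradient, which also gives the concluding assertion $\partial^+u(x)=\partial^{+,pr}u(x)$. For the ``only if'' direction I would first treat the case $p\in\partial^+u(x)$ via a one-dimensional reduction: setting $v=y-x$ and $g(t)=u(x+tv)$ for $t\in[0,1]$, the semiconcavity of $u$ applied with center $x+tv$ and increment $hv$ shows that $g$ is semiconcave on $[0,1]$ with constant $c|v|^2$, so $\tilde g(t):=g(t)-\tfrac{c}{2}|v|^2t^2$ is concave. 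Concavity between the endpoints yields $\tilde g(1)\le\tilde g(0)+\tilde g'_+(0)$, and since $g'_+(0)\le\langle p,v\rangle$ (a direct consequence of $p\in\partial^+u(x)$), the quadratic inequality follows with constant $c/2\le c$.

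To promote the bound from $\partial^+u(x)$ to $\partial u(x)$ I would invoke the previously recalled identity $\partial u(x)=co(\partial^*u(x))$. For any $p=\lim_n\nabla u(x_n)$ with $x_n\to x$, the inequality already proved at each $x_n$ (noting $\nabla u(x_n)\in\partial^+u(x_n)$ by differentiability) passes to the limit, so the bound holds on $\partial^*u(x)$. Since the set of $p\in\mathbb{R}^n$ satisfying the quadratic inequality for all admissible $y$ is convex (an intersection of closed half-spaces in $p$), it also contains $co(\partial^*u(x))=\partial u(x)$. The same chain yields (2): combining $\partial^+u(x)\subset\partial u(x)$ with the reverse inclusion $\partial u(x)=co(\partial^*u(x))\subset\partial^+u(x)$ just proved forces all three sets to coincide.

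For (3), assume $\partial^+u(x)=\{p\}$. Part (2) gives $\partial^*u(x)=\{p\}$, and the upper bound $u(y)-u(x)-\langle p,y-x\rangle\le c|y-x|^2$ is already in hand. For the matching lower bound I would apply (1) ``in reverse'': for $y$ near $x$ pick any $q_y\in\partial^+u(y)$ (nonempty since $u$ is semiconcave) and use the characterization at $y$ with test point $x$ to get $u(y)-u(x)\ge\langle q_y,y-x\rangle-c|y-x|^2$. The key input is upper semicontinuity of the superdifferential of a semiconcave function, itself a consequence of (1): any accumulation point of $q_{y_n}$ with $y_n\to x$ must satisfy the characterization at $x$, hence lies in $\partial^+u(x)=\{p\}$. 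Thus $|q_y-p|\to 0$ as $y\to x$, and rearranging gives $u(y)-u(x)-\langle p,y-x\rangle\ge -o(|y-x|)$, which combined with the upper bound yields differentiability at $x$ with $\nabla u(x)=p$.

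The main obstacle I expect is the promotion step in (1) from Fr\'echet to Clarke: the quadratic inequality is intrinsically a Fr\'echet object, and one must verify its stability under passage to limits of reachable gradients and under convex combinations. A minor technical point is that the one-dimensional argument requires $[x,y]\subset A$, which is explicitly part of the hypothesis.
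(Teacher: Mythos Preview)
The paper does not supply a proof of this proposition: it is stated in the preliminaries as a collection of well-known properties of semiconcave functions, with the reader referred to \cite{MR2041617} for details. So there is no ``paper's own proof'' against which to compare.

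Your argument is correct and essentially self-contained. The one-dimensional reduction in (1) is the standard route, and your promotion from $\partial^+u(x)$ to $\partial u(x)$ via $\partial u(x)=co(\partial^*u(x))$ together with stability of the quadratic inequality under limits and convex combinations is clean; note only that passing to the limit along reachable gradients requires $[y,x_n]\subset A$ for $n$ large, which follows since $A$ is open and $[y,x]$ is compact, hence admits a tubular neighborhood in $A$. Part (3) is handled correctly as well: the upper semicontinuity of $y\mapsto\partial^+u(y)$ that you invoke is indeed an immediate consequence of the characterization in (1), since any cluster point of $q_{y_n}$ satisfies the same quadratic bound at $x$.
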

If $u$ is semiconvex, then (\ref{Booo}) holds reversing the inequality and the sign of the quadratic term and the other two statements are true with the Fr\'echet/proximal subdifferential instead of the Fre\'chet/proximal superdifferential. \\ 
In proving our main results we shall require the semiconvexity on the map $x \mapsto H(x,p)$. Let us recall a consequence which we will use later on.
\begin{lemma}[{\cite[Corollary~1]{MR2728465}}]\label{H2}
Suppose that $H$ is locally Lipschitz and the map $x\mapsto H(x,p)$ is locally semiconvex, where $H$ is as in (\ref{H}). Then, 
$$\partial H(x,p) \subset \partial_x^- H(x,p)\times \partial_p H(x,p), \quad \forall x\in\mathbb{R}^n, p  \in\mathbb{R}^n\setminus \lbrace 0 \rbrace.$$ 
\end{lemma}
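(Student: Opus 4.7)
The plan is to prove the stronger statement $\partial^* H(x,p) \subset \partial_x^- H(x,p) \times \partial_p H(x,p)$ at the level of reachable gradients and then pass to the convex hull, exploiting the identity $\partial H(x,p) = \mathrm{co}\, \partial^* H(x,p)$ for the Clarke generalized gradient of a locally Lipschitz function. Two features of $H$ will drive the argument: its convexity in $p$ (inherited from being a support function of $F(x)$) combined with the hypothesis of differentiability at $p\neq 0$; and the local semiconvexity of $H(\cdot,p)$.

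First I would pin down the $p$-component. Since $H(x,\cdot)$ is convex and differentiable at $p\neq 0$, a standard fact for convex functions forces $\nabla_p H(x,\cdot)$ to be continuous on $\mathbb{R}^n\setminus\{0\}$, and combined with the local Lipschitz continuity of $\nabla_p H(\cdot,p)$ assumed in the paper one obtains joint continuity of $\nabla_p H$ on $\mathbb{R}^n\times(\mathbb{R}^n\setminus\{0\})$. In particular, $\partial_p H(x,p)=\{\nabla_p H(x,p)\}$. Now let $(\zeta,\eta)\in\partial^* H(x,p)$, so that there exist $(x_i,p_i)\to(x,p)$ with $H$ differentiable at $(x_i,p_i)$ and $\nabla H(x_i,p_i)\to(\zeta,\eta)$. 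For large $i$, $p_i\neq 0$, and joint continuity forces $\eta=\nabla_p H(x,p)$.

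To handle the $x$-component, assume that the semiconvexity constant $c$ of $H(\cdot,p_i)$ is locally uniform in $p$ near the fixed $p$. Since $H$ is differentiable at $(x_i,p_i)$, its partial gradient $\nabla_x H(x_i,p_i)$ lies in $\partial_x^- H(x_i,p_i)$, and the semiconvex analogue of Proposition 2.2 yields
\begin{equation*}
H(y,p_i)\ge H(x_i,p_i)+\langle \nabla_x H(x_i,p_i),y-x_i\rangle-c|y-x_i|^2
\end{equation*}
for all $y$ in a fixed neighborhood of $x$. Letting $i\to\infty$, continuity of $H$ yields
\begin{equation*}
H(y,p)\ge H(x,p)+\langle \zeta,y-x\rangle-c|y-x|^2,
\end{equation*}
which by the same characterization places $\zeta$ in $\partial_x^- H(x,p)$. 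This proves the reachable-gradient inclusion, and taking convex hulls finishes the job: $\partial_x^- H(x,p)$ is already convex (it equals the partial Clarke subdifferential of a semiconvex function) and $\partial_p H(x,p)$ is a singleton, so their product is convex and contains $\mathrm{co}\,\partial^* H(x,p)=\partial H(x,p)$.

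The step I expect to be the main obstacle is ensuring that the semiconvexity constant of $H(\cdot,p)$ and the continuity of $\nabla_p H$ are sufficiently uniform as $p$ varies near the fixed point. Both properties are natural consequences of the paper's structural hypotheses on $H$, but combining them cleanly — in particular deriving joint continuity of $\nabla_p H$ from separate hypotheses on each variable — requires a bit more care than invoking convexity or semiconvexity in isolation.
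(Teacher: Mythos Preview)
The paper does not prove this lemma; it is quoted without proof from the cited reference, so there is no in-paper argument to compare against. Your overall strategy --- establish the inclusion at the level of reachable gradients and then pass to the convex hull via $\partial H = \mathrm{co}\,\partial^* H$ --- is the natural one and is essentially correct. Your treatment of the $x$-component via the semiconvexity inequality is fine, and you are right to flag the need for local uniformity of the semiconvexity constant in $p$ (which is exactly what (H1)(i), with constant $c|p|$, provides).

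There is, however, a genuine gap in your handling of the $p$-component: you invoke differentiability of $H(x,\cdot)$ at every $p\neq 0$ and local Lipschitz continuity of $\nabla_p H(\cdot,p)$ --- i.e., hypothesis (H1)(ii) --- but these are \emph{not} among the hypotheses of the lemma. The lemma assumes only that $H$ is locally Lipschitz, that $x\mapsto H(x,p)$ is locally semiconvex, and that $H$ is the support function of $F(x)$; (H1)(ii) is introduced only later in the paper. The fix is immediate and in fact mirrors your $x$-argument: since $H(x_i,\cdot)$ is convex and $H$ is differentiable at $(x_i,p_i)$, the convex subgradient inequality
\[
H(x_i,q)\ge H(x_i,p_i)+\langle \nabla_p H(x_i,p_i),\,q-p_i\rangle\qquad\forall\,q\in\mathbb{R}^n
\]
holds automatically. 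Passing to the limit using only continuity of $H$ yields $H(x,q)\ge H(x,p)+\langle \eta,\,q-p\rangle$ for all $q$, hence $\eta\in\partial_p H(x,p)$. No differentiability of $H(x,\cdot)$ at the limit point $p$, and no joint continuity of $\nabla_p H$, is required. The convex-hull step then goes through unchanged, since $\partial_p H(x,p)$ is convex (as the subdifferential of a convex function) even when it is not a singleton, and the product of two convex sets is convex.
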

\subsection{Differential Inclusions and Standing Assumptions}
We recall that the Hausdorff distance between two compact sets $A_i \subset \mathbb{R}^n,\ i=1,2$, is 
\[
dist_{\mathcal{H}}(A_1,A_2)=\max \lbrace dist^+_{\mathcal{H}}(A_1,A_2),dist^+_{\mathcal{H}}(A_2,A_1)  \rbrace,
\]
where $dist^+_{\mathcal{H}}(A_1,A_2)=\inf\lbrace \epsilon: A_1 \subset A_2 + B(0,\epsilon)  \rbrace$ is the semidistance. We say that a multifunction $F:\mathbb{R}^n \rightrightarrows \mathbb{R}^n$ with nonempty and compact values is locally Lipschitz if for each $x\in \mathbb{R}^n$ there exists a neighborhood $K$ of $x$ and a constant $c>0$ depending on $K$ so that $dist_{\mathcal{H}}(F(z),F(y))\leq c \mid z-y \mid $ for all $z, y \in K$.\\

Throughout this paper, we assume that the multifunction $F$ satisfies a collection of classical conditions of the theory of differential inclusions, the so-called \emph{Standing Hypotheses}:
$$\mbox{ \textbf{(SH)} }
\left\{\begin{array}{ll} 
(i) & F(x) \mbox{ is nonempty, convex, compact for each } x \in \mathbb{R}^n,\\
(ii) & F \mbox{ is locally Lipschitz with respect to the Hausdorff metric},\\
(iii)& \exists r>0 \mbox{ so that } \max \lbrace \vert v \vert : v\in F(x)\rbrace \leq r (1+\vert x \vert)\, \forall x \in \mathbb{R}^n.
\end{array}\right.$$

Assumptions (SH)(i)-(ii) guarantee the existence of local solutions of (\ref{May1})-(\ref{May2}) and  (SH)(iii) guarantees that solutions are defined on $[t_0,T]$.\\

For the sake of brevity, we usually refer to the Mayer problem (\ref{May1})-(\ref{May2})-(\ref{Mayer}) as $\mathcal{P}(t_0,x_0)$. Assuming (SH) and $\phi$ lower semicontinuous implies that $\mathcal{P}(t_0,x_0)$ has at least one \emph{optimal solution}, that means a solution $\overline{x}(\cdot)\in W^{1,1}\left([t_0,T]; \mathbb{R}^n\right)$ of (\ref{May1}) satisfying (\ref{May2}) such that 
\[
\phi(\overline{x}(T))\leq \phi (x(T)),
\]
for any trajectory $x(\cdot)\in W^{1,1}\left([t_0,T]; \mathbb{R}^n\right)$ of (\ref{May1}) satisfying (\ref{May2}). Actually, the Standing Hypotheses were first introduced with the only property of upper semicontinuity of $F$ instead of (SH)(ii) and that would be enough to deduce the existence of optimal trajectories, but in this paper we will often take advantage of the local Lipschitzianity of $F$. For the basics of the theory of differential inclusions we refer, e.g., to \cite{MR755330}.\\

Under assumption (SH) one can show that it is possible to associate with each optimal trajectory $x(\cdot)$ for $\mathcal{P}(t_0,x_0)$ an arc $p(\cdot)$ such that the pair $(x(\cdot),p(\cdot))$ satisfies a Hamiltonian inclusion. See, e.g., \cite[Theorem~3.2.6]{MR709590}.
\begin{theorem}\label{TheoDualArc}
Assume (SH) and that $\phi:\mathbb{R}^n \rightarrow \mathbb{R}$ is locally Lipschitz. If $x(\cdot)$ is an optimal solution for $\mathcal{P}(t_0,x_0)$, then there exists an arc $p:[t_0,T]\rightarrow \mathbb{R}^n$ which, togheter with $x(\cdot)$, satisfies
\begin{equation}\label{AdjointSystem}
(-\dot{p}(s),\dot{x}(s)) \in \partial H(x(s),p(s)), \mbox{ a.e. }\ s\in \left[ t_0, T \right], 
\end{equation}
and 
\begin{equation}\label{TC}
-p(T)\in \partial \phi(x(T)). 
\end{equation}
\end{theorem}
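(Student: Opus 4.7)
The plan is to derive Theorem \ref{TheoDualArc} as an instance of the nonsmooth Hamiltonian maximum principle for Bolza-type problems with differential inclusion constraints, using convex duality between the Lagrangian description of the dynamics and the Hamiltonian. Since (SH) provides local Lipschitz continuity, convexity and compactness of values, together with linear growth, standard results on $F$ transfer directly to the associated Hamiltonian $H$.

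First, I would reduce the problem to a compact/Lipschitz setting. Since an optimal trajectory $x(\cdot)$ lies in a compact set $K\subset\mathbb{R}^n$ by (SH)(iii), I can replace $F$ and $\phi$ by modifications that agree with the originals in a neighborhood of $x(\cdot)$ but are globally Lipschitz and bounded; optimality of $x(\cdot)$ is preserved for the modified problem. Next, I would recast $\mathcal{P}(t_0,x_0)$ as a Bolza problem by introducing the extended real-valued Lagrangian
\begin{equation*}
L(x,v)=I_{F(x)}(v)=\begin{cases}0,&v\in F(x),\\+\infty,&\text{otherwise,}\end{cases}
\end{equation*}
so that minimizing $\phi(x(T))+\int_{t_0}^{T} L(x(s),\dot x(s))\,ds$ over $W^{1,1}$ arcs with $x(t_0)=x_0$ is equivalent to $\mathcal{P}(t_0,x_0)$, and $H(x,p)=\sup_v\{\langle p,v\rangle-L(x,v)\}$ is precisely \eqref{H}.

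Second, I would invoke the nonsmooth Euler--Lagrange/Hamiltonian inclusion for Bolza problems (as developed by Clarke and refined in Vinter's treatise): there exists $p\in W^{1,1}([t_0,T];\mathbb{R}^n)$ such that along $x(\cdot)$,
\begin{equation*}
\dot p(s)\in\mathrm{co}\{\zeta : (\zeta,p(s))\in\partial L(x(s),\dot x(s))\}\quad\text{a.e.,}
\end{equation*}
together with the maximization condition $\langle p(s),\dot x(s)\rangle=H(x(s),p(s))$ a.e.\ and the transversality $-p(T)\in\partial\phi(x(T))$. The key step is then to pass from the Lagrangian form of the adjoint inclusion to the Hamiltonian form \eqref{AdjointSystem}. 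This is achieved through the Rockafellar duality between the subgradients of $L$ and those of its partial Legendre transform $H$: under the present hypotheses on $F$, for a.e.\ $s$ one has
\begin{equation*}
(\zeta,p)\in\partial L(x,v)\ \text{with}\ \langle p,v\rangle=H(x,p)\ \Longleftrightarrow\ (-\zeta,v)\in\partial H(x,p),
\end{equation*}
which, combined with the convex-hull structure of Clarke's gradient, yields $(-\dot p(s),\dot x(s))\in\partial H(x(s),p(s))$ a.e.

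The main technical obstacle is precisely establishing the subdifferential duality above in sufficient generality: $L$ is only extended real-valued and its effective domain $\mathrm{gph}\,F$ is nonsmooth, so one must rely on the Lipschitz regularity of $F$ supplied by (SH)(ii) to apply Rockafellar's formula linking $\partial L$ and $\partial H$. The transversality condition $-p(T)\in\partial\phi(x(T))$ follows directly from the Bolza maximum principle applied to the free right endpoint, using local Lipschitz continuity of $\phi$. Together these pieces deliver \eqref{AdjointSystem} and \eqref{TC}.
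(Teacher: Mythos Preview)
The paper does not prove this theorem at all: it is stated as a known result with the reference ``See, e.g., \cite[Theorem~3.2.6]{MR709590}'' (Clarke's \emph{Optimization and Nonsmooth Analysis}). So there is no in-paper proof to compare against.

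Your outline is a faithful sketch of how such Hamiltonian necessary conditions are derived in the literature: localize to a tube around the optimal arc, recast the Mayer problem as a Bolza problem with indicator Lagrangian $L(x,v)=I_{F(x)}(v)$, apply the nonsmooth Euler--Lagrange inclusion with transversality, and then convert to Hamiltonian form via the Lagrangian--Hamiltonian subdifferential duality. This is essentially the route taken in Clarke's book and in Vinter's, so your proposal is aligned with the cited source. One caveat: the duality step you write as an equivalence is the delicate part, and in Clarke's treatment it is not obtained from Rockafellar's convex theorem alone (since $L$ is not jointly convex in $(x,v)$) but rather from a direct analysis of the generalized gradient of $H$ under the Lipschitz hypothesis (SH)(ii); you correctly flag this as the main technical obstacle, but your phrasing ``Rockafellar's formula'' slightly undersells the nonsmooth work required. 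With that adjustment, the plan is sound.
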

Given an optimal trajectory $x(\cdot)$, any arc $p(\cdot)$ satisfying the \emph{adjoint system} (\ref{AdjointSystem}) and the \emph{tranversality condition} (\ref{TC}) is called a \emph{dual arc} associated with $x(\cdot)$. Furthermore, if $(q,v)$ belongs to $\partial H(x,p)$, then $v\in F(x)$ and $\langle p, v \rangle = H(x,p)$. Thus the system \eqref{AdjointSystem} encodes the equality
\begin{equation}\label{MAXIMUMprincipleEQUATION}
H(x(t),p(t))= \langle p(t), \dot{x}(t) \rangle \mbox{ for a.e. } t \in [t_0,T].
\end{equation}
This equality shows that the scalar product $\langle v,p(t) \rangle$ is maximized over $F(x(t))$ by $v=\dot{x}(t)$. For this reason, the previous result is known as the \emph{maximum principle} (in Hamiltonian form).
\begin{remark}\label{RemarkDualArc}
If the dual arc introduced in Theorem \ref{TheoDualArc} is equal to zero at some time $t \in [t_0,T]$, then it is equal to zero at every time. Indeed, consider a compact set $K \subset \mathbb{R}^n$ containing $\overline{x}([t_0,T])$. If we denote by $c_K$ the Lipschitz constant of $F$ on $K$, it follows that $c_K \vert p\vert$ is the Lipschitz constant for $H(\cdot, p)$ on the same set. Indeed, let $x,y\in K$ and $v_x$ be such that $H(x,p)= \langle v_x,p \rangle$. By (SH), there exists $v_y \in F(y)$ such that
\begin{equation}
H(x,p)-H(y,p)\leq \langle v_x - v_y , p \rangle \leq c_K \vert p \vert \vert x - y \vert.
\end{equation}
Recalling (\ref{ClarkeGrad}), it follows that
\begin{equation}\label{Impoooo}
\vert \zeta \vert \leq c_K \vert p \vert, \ \forall \zeta \in \partial_x H(x,p), \forall x \in K, \forall p \in \mathbb{R}^n. 
\end{equation} 
Hence, in view of the differential inclusion verified by $\overline{p}(\cdot)$, 
\begin{equation}\label{Gronwall}
\vert \dot{\overline{p}}(s)\vert\leq c_K \vert \overline{p}(s) \vert, \mbox{  for a.e. } s \in [t_0,T]. 
\end{equation}
By Gronwall's Lemma, we obtain that either $\overline{p}(s)\neq 0$ for every $s\in[t_0,T]$, or $\overline{p}(s)= 0$ for every $s\in[t_0,T]$.\\
\end{remark}
Recall now that the \emph{value function} $V:[0,T] \times \mathbb{R}^n \rightarrow \mathbb{R}$ associated to the Mayer problem is defined by: for all $(t_0,x_0)\in [0,T]\times\mathbb{R}^n$ 
\begin{equation}\label{ValueFunction}
V(t_0,x_0)= \inf \left\lbrace \phi(x(T)) : x \in W^{1,1}\left([t_0,T]; \mathbb{R}^n\right) \mbox{ satisfies } (\ref{May1}) \mbox{ and }(\ref{May2}) \right\rbrace.
\end{equation} 
As far as $V$ is concerned, recall that, under assumptions (SH), $V$ is locally Lipschitz and solves in the viscosity sense the Hamilton-Jacobi equation
\begin{equation}\label{HJBeq}
\left\{\begin{array}{l} 
-\partial_t u(t,x) + H(x,-u_x (t,x) )=0\quad \mbox{ in } (0,T)\times \mathbb{R}^n,\\
u(T,x)=\phi(x), \mbox{ } x\in \mathbb{R}^n,
\end{array}\right.
\end{equation}
where $H$ is the Hamiltonian associated to $F$. Indeed, if the multifunction $F$ satisfies assumption (SH), then it always admits a parameterization by locally Lipschitz function (see, e.g., \cite[Theorem~7.9.2]{MR1048347}) and the result is well-known for the Lipschitz-parametric case (see, e.g., \cite{MR2041617}).
\begin{proposition}\label{LemmaSolu}
Assume (SH) and that $\phi:\mathbb{R}^n \rightarrow \mathbb{R}$ is locally Lipschitz. Then the value function of the Mayer problem is the unique viscosity solution of the problem \eqref{HJBeq}, where the Hamiltonian $H$ is given by \eqref{H}.
\end{proposition}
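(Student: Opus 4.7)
The plan is to reduce the differential inclusion case to the classical parameterized control case, which is the strategy already flagged in the paragraph preceding the statement. The proof will have three conceptual steps: \emph{parameterize}, \emph{identify the value functions}, and \emph{invoke the known result}.

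First, I would apply the cited parameterization theorem (\cite[Theorem~7.9.2]{MR1048347}) to the multifunction $F$, which under (SH) yields a compact metric space $U$ and a map $f:\mathbb{R}^n\times U\to\mathbb{R}^n$ that is locally Lipschitz in $x$, continuous in $u$, with linear growth in $x$, and such that $F(x)=\{f(x,u):u\in U\}$ for every $x\in\mathbb{R}^n$. In particular, the associated Hamiltonian coincides with the one in \eqref{H}:
\begin{equation*}
\sup_{u\in U}\langle p,f(x,u)\rangle=\sup_{v\in F(x)}\langle v,p\rangle=H(x,p).
\end{equation*}

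Second, I would show that the value function $V$ defined in \eqref{ValueFunction} coincides with the value function
\begin{equation*}
\widetilde V(t_0,x_0)=\inf_{u(\cdot)}\phi(x(T;t_0,x_0,u)),
\end{equation*}
of the parameterized Mayer problem \eqref{ControlCase}. The inclusion $\widetilde V\ge V$ is immediate since every control trajectory $x(\cdot;t_0,x_0,u)$ is a solution of the differential inclusion. The reverse inequality $V\ge \widetilde V$ requires a measurable selection argument: given an absolutely continuous $x(\cdot)$ satisfying \eqref{May1}, Filippov's implicit function theorem produces a measurable $u:[t_0,T]\to U$ with $\dot x(t)=f(x(t),u(t))$ a.e., hence $x(\cdot)$ is a control trajectory. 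Therefore $V=\widetilde V$.

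Third, since $f$ is locally Lipschitz in $x$ and continuous in $u$ on the compact metric space $U$ with linear growth, the classical viscosity solution theory for the parameterized Mayer problem (see, e.g., \cite{MR2041617}) ensures that $\widetilde V$ is locally Lipschitz on $[0,T]\times\mathbb{R}^n$ and is the unique viscosity solution of \eqref{HJBeq} with terminal condition $\phi$; in particular uniqueness follows from a standard comparison principle, which applies because the $x$-Lipschitz constant of $H(\cdot,p)$ is controlled by $|p|$ times the local Lipschitz constant of $f$, exactly the structure required in the classical uniqueness proofs. Combining with $V=\widetilde V$ yields the statement.

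The only genuinely non-routine point is the identification $V=\widetilde V$, i.e., converting arbitrary inclusion trajectories into measurable-control trajectories; once the Lipschitz parameterization of $F$ is in hand, this reduces to a standard Filippov-type measurable selection, so no essential obstacle remains.
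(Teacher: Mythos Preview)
Your proposal is correct and follows exactly the approach the paper indicates: the paper does not give a formal proof of this proposition but simply records, in the paragraph preceding the statement, that (SH) allows a locally Lipschitz parameterization of $F$ via \cite[Theorem~7.9.2]{MR1048347} and that the result then reduces to the classical parameterized case treated in \cite{MR2041617}. Your three-step outline (parameterize, identify value functions via Filippov's measurable selection, invoke the classical theory) is a faithful and appropriately detailed execution of that sketch; the only step you make explicit beyond the paper is the identification $V=\widetilde V$, which is indeed the one point requiring a word of justification.
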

We conclude this part recalling that $V$ satisfies the \emph{dynamic programming principle}. Hence, if $y(\cdot)$ is any trajectory of the system \eqref{May1}-\eqref{May2}, then the function $s\rightarrow V(s,y(s))$ is nondecreasing, and it is constant if and only if $y(\cdot)$ is optimal.
\subsection{Sufficient conditions for optimality}
In the control literature, it is well known that the full sensitivity relation involving the Fr\'echet superdifferential of $V$, coupled with the maximum principle, is a sufficient condition for optimality. For reader's convenience, we shall recall this result in our context. The proof uses the same arguments of Theorem $4.1.$ in \cite{Cannarsa:1991:COT:120771.118946} and it is omitted here. In  \cite{Cannarsa:1991:COT:120771.118946}, the authors have used the fact that (see, e.g., \cite{MR1048347})
\begin{equation}\label{DiniDer0}
\begin{split}
\partial^+ V(t, \overline{x}(t))&=\left\lbrace (p',p'') \in\mathbb{R}\times\mathbb{R}^n :\right. \\
&\left. \forall (\theta',\theta'')\in\mathbb{R}\times\mathbb{R}^n, D_{\downarrow} V (t,\overline{x}(t))(\theta',\theta'') \leq p' \theta'+ \langle p'',\theta'' \rangle \right\rbrace,
\end{split}
\end{equation}
where the \emph{upper Dini derivative} of $V$ at $(t,\overline{x}(t))$ in the direction $(\theta',\theta'')$ is given by 
\begin{equation}\label{DiniDer}
D_{\downarrow} V (t,\overline{x}(t))(\theta',\theta''):= \limsup_{\tau \rightarrow 0^+} \frac{V(t+ \tau \theta' , \overline{x}(t)+\tau \theta'') - V(t, \overline{x}(t))}{\tau }.
\end{equation}
\begin{theorem}\label{Sufficient}
Assume (SH) and let $x:[t_0,T]\rightarrow \mathbb{R}^n$ be a solution of the system \eqref{May1}-\eqref{May2}. If, for almost every $t\in [t_0,T]$, there exists $p(t)\in\mathbb{R}^n$ such that
\begin{equation}\label{hhh}
\begin{array}{c}
\langle p(t),\dot{x}(t)\rangle =H(x(t),p(t)),\\
( H(x(t),p(t)),-p(t)) \in \partial^+ V(t,x(t)),
\end{array}
\end{equation}
then $x$ is optimal for problem $\mathcal{P}(t_0,x_0)$. 
\end{theorem}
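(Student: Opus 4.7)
The plan is to exploit the dynamic programming principle, which asserts that along any feasible trajectory $y(\cdot)$ of \eqref{May1}--\eqref{May2} the function $s\mapsto V(s,y(s))$ is nondecreasing, and constant if and only if $y$ is optimal. So I would reduce the claim to showing that $g(s):=V(s,x(s))$ is nonincreasing on $[t_0,T]$; combined with the dynamic programming monotonicity this forces $g$ to be constant, hence $\phi(x(T))=V(T,x(T))=V(t_0,x_0)$, i.e.\ $x$ is optimal.

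Since $V$ is locally Lipschitz (Proposition \ref{LemmaSolu}) and $x\in W^{1,1}$, the map $g$ is absolutely continuous on $[t_0,T]$, hence differentiable a.e. Fix a point $t\in(t_0,T)$ where $g'(t)$ exists, where $\dot x(t)$ exists, and where hypothesis \eqref{hhh} is satisfied. I would then write
\begin{equation*}
g'(t)=\lim_{\tau\to 0^+}\frac{V(t+\tau,x(t+\tau))-V(t,x(t))}{\tau},
\end{equation*}
and replace $x(t+\tau)$ by the tangent point $x(t)+\tau\dot x(t)$ inside the difference quotient: since $x(t+\tau)-x(t)-\tau\dot x(t)=o(\tau)$ and $V$ is Lipschitz in $x$ near $(t,x(t))$, the resulting error is $o(\tau)/\tau\to 0$. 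This produces the bound
\begin{equation*}
g'(t)\le D_{\downarrow}V(t,x(t))\bigl(1,\dot x(t)\bigr).
\end{equation*}

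Next, applying the characterization \eqref{DiniDer0} of $\partial^+V$ to the sensitivity relation $\bigl(H(x(t),p(t)),-p(t)\bigr)\in\partial^+V(t,x(t))$ with the direction $(\theta',\theta'')=(1,\dot x(t))$ gives
\begin{equation*}
D_{\downarrow}V(t,x(t))\bigl(1,\dot x(t)\bigr)\le H(x(t),p(t))-\langle p(t),\dot x(t)\rangle,
\end{equation*}
and by the maximum principle in \eqref{hhh} the right-hand side vanishes. Hence $g'(t)\le 0$ a.e., while dynamic programming yields $g'(t)\ge 0$ a.e.; therefore $g$ is constant and the conclusion follows.

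The only nontrivial point in this plan is the passage from the finite-difference quotient of $g$ to the upper Dini derivative $D_{\downarrow}V$ along the direction $(1,\dot x(t))$, i.e.\ the justification that the $o(\tau)$ perturbation coming from $x(t+\tau)-x(t)-\tau\dot x(t)$ can be absorbed; this is where the local Lipschitzianity of $V$ provided by Proposition \ref{LemmaSolu} is essential. Everything else is a direct combination of the superdifferential inequality with the Hamiltonian identity, which is exactly why the pair \eqref{hhh} deserves to be called the right sufficient optimality condition.
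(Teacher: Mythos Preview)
Your argument is correct and is essentially the approach the paper has in mind: the proof is omitted in the paper but explicitly attributed to the method of \cite{Cannarsa:1991:COT:120771.118946}, whose key ingredient---highlighted in the paper via \eqref{DiniDer0}--\eqref{DiniDer}---is precisely the Dini-derivative characterization of $\partial^+V$ that you use to bound $g'(t)$ by $H(x(t),p(t))-\langle p(t),\dot x(t)\rangle=0$. Combined with the monotonicity from dynamic programming and the absolute continuity of $g$ (which you justify correctly via the local Lipschitz continuity of $V$), this yields the constancy of $t\mapsto V(t,x(t))$ and hence optimality.
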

\subsection{Main assumptions}
We impose further conditions on the Hamiltonian associated to $F$. For each nonempty, convex and compact subset $K\subseteq \mathbb{R}^n$,
$$\mbox{ \textbf{(H1)} }
\left\{ 
\begin{array}{ll}
(i) & \exists\ c \geq 0 \mbox{ so that } x \mapsto H(x,p)\mbox{ is semiconvex on K with constant}
\\  & c \mid p\mid, \\
(ii) & \mbox{the gradient }\nabla_p H(x,p) \mbox{ exists and is locally Lipschitz in } x \mbox{ on}\\
 & K, \mbox{ uniformly over }p \mbox{ in any compact subset of }\mathbb{R}^n \smallsetminus \lbrace 0\rbrace.
\end{array}\right.$$\\ 
Some examples of multifunctions satisfying
(SH) and (H1) are given in \cite{MR2728465}.\\

Let us start by analyzing the meaning of the assumption (H1)(i). The semiconvexity of the map $x \mapsto H(x,p)$ on a convex compact subset $K$ of $\mathbb{R}^n$ with constant $c \mid p \mid$ is equivalent to the \emph{mid-point property} of the multifunction $F$ on $K$, that is
\[
dist^+_{\mathcal{H}} \left( 2 F(x), F(x+z)+F(x-z) \right) \leq c \mid z \mid^2,
\]
for all $x, z$ so that $x, x\pm z\in K$. A consequence of the above hypotheses is that the generalized gradient of $H$ splits into two components, as described in Lemma \ref{H2}. This implies that the adjoint system \eqref{AdjointSystem} takes the form \eqref{HI2} below.
\begin{theorem}[{\cite[Corollary~2]{MR2728465}}]\label{TheoDualArc2}
Assume that (SH) and (H1)(i) hold and $\phi:\mathbb{R}^n \rightarrow \mathbb{R}$ is locally Lipschitz. If $x(\cdot)$ is an optimal solution for $\mathcal{P}(t_0,x_0)$, then there exists an arc $p:[t_0,T]\rightarrow \mathbb{R}^n$ which, together with $x(\cdot)$, satisfies
\begin{equation}\label{HI2}
\left\{\begin{array}{rll}
-\dot{p}(s) &\in& \partial_x^- H(x(s),p(s)), \\
\dot{x}(s) &\in& \partial_p H(x(s),p(s)), 
\end{array}\right.
\mbox{ a.e. }\ s\in \left[ t_0, T \right]
\end{equation}
and
\begin{equation}\label{TC2}
-p(T)\in \partial \phi(x(T)). 
\end{equation}
\end{theorem}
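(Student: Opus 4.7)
\medskip

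\noindent\textbf{Proof proposal.} The statement is essentially a refinement of Theorem \ref{TheoDualArc}, obtained by exploiting the splitting of $\partial H$ afforded by the semiconvexity assumption (H1)(i) via Lemma \ref{H2}. My plan is therefore to extract a dual arc from the general Hamiltonian inclusion and then argue, componentwise, that under (H1)(i) each selection of $\partial H(x,p)$ lies in the product $\partial_x^- H(x,p)\times\partial_pH(x,p)$ along the trajectory.

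First, I would invoke Theorem~\ref{TheoDualArc}: since (SH) holds and $\phi$ is locally Lipschitz, associated with the optimal trajectory $x(\cdot)$ there exists an arc $p:[t_0,T]\to\mathbb{R}^n$ such that
\[
(-\dot{p}(s),\dot{x}(s))\in\partial H(x(s),p(s))\quad\text{a.e. }s\in[t_0,T],
\]
together with the transversality condition $-p(T)\in\partial\phi(x(T))$, which is exactly \eqref{TC2}. So the only thing left to upgrade is the Hamiltonian inclusion into the partial form \eqref{HI2}.

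Next, I would separate the case analysis suggested by Remark~\ref{RemarkDualArc}: by Gronwall applied to \eqref{Gronwall}, either $p(s)\neq 0$ for every $s\in[t_0,T]$ or $p\equiv 0$ on $[t_0,T]$. In the nonvanishing case, $p(s)\neq 0$ pointwise lets me apply Lemma~\ref{H2} at almost every $s$ to obtain
\[
\partial H(x(s),p(s))\subset\partial_x^- H(x(s),p(s))\times\partial_p H(x(s),p(s)),
\]
so that the inclusion from Theorem~\ref{TheoDualArc} decomposes directly into the two inclusions of \eqref{HI2}. In the degenerate case $p\equiv 0$, $H(x,0)\equiv 0$ forces $\partial_x^- H(x(s),0)=\{0\}$ and so $-\dot p(s)=0\in\partial_x^- H(x(s),0)$ trivially; moreover, $H(x,\cdot)$ is the support function of $F(x)$, hence convex in $p$, and its convex subdifferential at $p=0$ equals $F(x)$, giving $\dot x(s)\in F(x(s))=\partial_p H(x(s),0)$ a.e.\ since $x$ is a trajectory of \eqref{May1}. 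Either way, \eqref{HI2} holds.

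I do not expect serious obstacles: the content of the result lies entirely in Lemma~\ref{H2} (which itself rests on the mid-point characterization of semiconvexity recalled just before the statement) and in the alternative ``$p\equiv 0$ or $p$ never vanishes'' provided by Remark~\ref{RemarkDualArc}. The only point to be careful about is that Lemma~\ref{H2} requires $p\neq 0$, so the case $p\equiv 0$ must be handled by hand as above, using only the concrete form $H(x,p)=\sup_{v\in F(x)}\langle v,p\rangle$. Once these two items are combined with Theorem~\ref{TheoDualArc}, the corollary follows with no further work.
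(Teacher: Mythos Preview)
Your proposal is correct and is exactly the argument the paper has in mind. The paper does not give a proof of this statement---it is quoted as \cite[Corollary~2]{MR2728465}---but the sentence immediately preceding the theorem (``A consequence of the above hypotheses is that the generalized gradient of $H$ splits into two components, as described in Lemma~\ref{H2}. This implies that the adjoint system~\eqref{AdjointSystem} takes the form~\eqref{HI2} below.'') is precisely your strategy: start from Theorem~\ref{TheoDualArc} and feed the inclusion $(-\dot p,\dot x)\in\partial H(x,p)$ through Lemma~\ref{H2}. Your explicit treatment of the degenerate case $p\equiv 0$ via Remark~\ref{RemarkDualArc}, and the identification $\partial_p H(x,0)=F(x)$ from the support-function structure, fills in the only detail not spelled out by that sentence.
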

Concerning the assumption (H1)(ii), the existence of the gradient of $H$ with respect to $p$ is equivalent to the fact that the argmax set of $\langle v,p \rangle$ over $v\in F(x)$ is the singleton $\lbrace \nabla_p H(x,p)\rbrace$, for each $p\neq 0$. Thus, the following relation holds:
\begin{equation}\label{derive}
H(x,p)= \langle \nabla_p H(x,p), p \rangle,\ \forall p \neq 0.
\end{equation}
Moreover, it is easy to see that, for every $x$, the boundary of the sets $F(x)$ contains no line segment.\\
The main impact of the local Lipschitzianity of the map $x \mapsto\nabla_p H(x,p)$ is the following result, whose proof is straightforward.
\begin{lemma}[{\cite[Proposition~3]{MR2728465}}]\label{LemmmaLip}
Assume (SH) and (H1). Let $p:[t,T]\rightarrow \mathbb{R}^n$ be an absolutely continuous arc with $p(s)\neq 0$ for all $s\in [t,T]$. Then, for each $x \in \mathbb{R}^n$, the Cauchy problem
\begin{equation}\label{Cau}
\left\{\begin{array}{l}
\dot{y}(s)= \nabla_p H ( y(s),p(s)) \quad \mbox{ for all } s\in \left[ t, T \right],\\
y(t)= x,
\end{array}\right.
\end{equation}
has a unique solution $y(\cdot;t,x)$. Moreover, there exists a constant $k$ such that
\begin{equation}
| y(s;t,x)-y(s;t,z)|\leq e^{k (T-t)} |z-x|,\ \forall z,x\in\mathbb{R}^n, \forall s \in [t,T].
\end{equation}
\end{lemma}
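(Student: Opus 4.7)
The strategy is to recast \eqref{Cau} as a classical Carath\'eodory ODE and invoke Gronwall's inequality twice. Set $P:=p([t,T])$; since $p$ is continuous and nowhere zero, $P$ is a compact subset of $\mathbb R^n\setminus\{0\}$. The map $H(y,\cdot)$ is convex as a supremum of linear functionals, and by (H1)(ii) it is differentiable on $\mathbb R^n\setminus\{0\}$; since a convex function differentiable on an open set is automatically $C^1$ there, $p\mapsto\nabla_pH(y,p)$ is continuous on $\mathbb R^n\setminus\{0\}$ for each $y$. Combined with the local Lipschitz continuity of $y\mapsto\nabla_pH(y,p)$ uniform in $p\in P$ provided by (H1)(ii), this yields joint continuity of $(y,p)\mapsto\nabla_pH(y,p)$ on $\mathbb R^n\times P$. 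Hence the vector field $g(s,y):=\nabla_pH(y,p(s))$ is continuous in $s$ for each $y$ and locally Lipschitz in $y$ uniformly in $s$, which is far more than Carath\'eodory regularity.

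For existence on the whole of $[t,T]$, I would use (H1)(ii) to see that the argmax of $v\mapsto\langle v,p(s)\rangle$ over $F(y)$ reduces to $\{\nabla_pH(y,p(s))\}$, so $\nabla_pH(y,p(s))\in F(y)$, and (SH)(iii) then gives the linear bound $|g(s,y)|\le r(1+|y|)$. Carath\'eodory's existence theorem yields a local solution, and the linear growth together with Gronwall's inequality produces a global solution on $[t,T]$ satisfying an a priori bound $\|y\|_\infty\le R$ that depends only on $|x|$ and $T-t$.

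For uniqueness and Lipschitz dependence on the initial datum, fix $x,z\in\mathbb R^n$ and choose $R$ larger than the a priori bound associated with initial data of size $\max\{|x|,|z|\}$. By (H1)(ii) applied to $\overline B(0,R)$ and to $P$, there exists a constant $k=k(R,P)$ such that
\[
|\nabla_pH(y_1,p)-\nabla_pH(y_2,p)|\le k\,|y_1-y_2|\qquad\forall\,y_1,y_2\in\overline B(0,R),\ p\in P.
\]
Subtracting the integral forms of \eqref{Cau} for the two initial conditions and applying Gronwall then yields the claimed estimate $|y(s;t,x)-y(s;t,z)|\le e^{k(T-t)}|x-z|$, and uniqueness follows by taking $x=z$. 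The one subtle point in the whole argument is the continuity of $p\mapsto\nabla_pH(y,p)$ on $\mathbb R^n\setminus\{0\}$, which relies on the convexity of $H$ in $p$; once that is in hand everything reduces to the classical theory of ODEs.
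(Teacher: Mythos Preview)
Your argument is correct and is precisely the ``straightforward'' proof the paper alludes to: the paper does not give its own proof of this lemma, citing instead \cite[Proposition~3]{MR2728465} and remarking immediately afterwards (Remark~\ref{RemarkContinuity}) on the one nontrivial point you also single out, namely the continuity of $p\mapsto\nabla_pH(x,p)$ for $p\neq0$ coming from convexity of $H$ in $p$. So there is nothing to compare; your write-up is exactly what one would expect.

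One small comment on the constant: as you correctly observe, the Lipschitz constant $k$ you obtain depends on the a priori bound $R$ for the trajectories, hence on $\max\{|x|,|z|\}$, because (H1)(ii) only provides a Lipschitz constant on each fixed compact set. The lemma as stated reads as if $k$ were universal over all $x,z\in\mathbb R^n$; in practice (and in every application made in the paper) the initial data lie in a fixed compact set, so this is harmless, but it is worth being aware that the dependence $k=k(R,P)$ is unavoidable under the stated hypotheses.
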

\begin{remark} \label{RemarkContinuity}
Note that the map $p \mapsto \nabla_p H(x,p)$ is continuous for $p\neq 0$. Thus, the Lipschitzianity of the map $x \mapsto \nabla_p H(x,p)$ implies that $(x,p)\mapsto\nabla_p H(x,p)$ is a continuous map, for $p\neq 0$. This is the reason why the system of ODEs \eqref{Cau} is verified everywhere on $[t,T]$,  not just almost everywhere.
Suppose now  $x(\cdot)$ is optimal for $\mathcal{P}(t_0,x_0)$ and $p(\cdot)$ is any nonvanishing dual arc associated with $x(\cdot)$---if they do exist. Then, Lemma \ref{LemmmaLip} implies that $x(\cdot)$ is the unique solution of the Cauchy problem (\ref{Cau}) with $t=t_0$, $x(t_0)= x_0$, and $p(\cdot)$ equal to such a dual arc. Furthermore, in this case, $x(\cdot)$ is of class $C^1$ and the maximum principle \eqref{MAXIMUMprincipleEQUATION} holds true for all $t\in [t_0,T]$.
\end{remark}
\subsection{$R$-convex sets}
Let $A$ be a compact and convex subset of $\mathbb{R}^n$ and $R>0$.
\begin{definition}
The set $A$ is \emph{$R-$convex} if, for each $z, y \in \partial A$ and any vectors $n\in N_A(z), m\in N_A(y)$ with $\mid n\mid =\mid m \mid=1$, the following inequality holds true
\begin{equation}
\mid z-y \mid \leq R \mid n-m \mid. 
\end{equation}
\end{definition}
The concept of $R-$convex set is not new. It is a special case of \emph{hyperconvex sets} (with respect to the ball of radius R and center zero) introduced by Mayer in \cite{MR1545515}. A study of hyperconvexity appears also in \cite{MR1505058,Pasq}. The notion of $R$-convexity was considered, among others, by Levitin, Poljak, Frankowska, Olech, Pli\'s, Lojasiewicz, and Vian (they called these sets
$R$-regular, $R$-convex, as well strongly convex). We first recall some interesting characterizations of $R-$convex sets.
\begin{proposition}[{\cite[Proposition 3.1]{MR648458}}]\label{equiv}
Let $A$ be a compact and convex subset of $\mathbb{R}^n$. Then the following conditions are equivalent
\begin{enumerate}
\item $A$ is $R-$convex,
\item $A$ is the intersection of a family of closed balls of radius $R$,
\item for any two points $x , y \in \partial A$ such that $|x-y|\leq 2 R$, each arc of a circle of radius $R$ which joins $x$ and $y$ and whose lenght is not greater that $\pi R$ is contained in $A$,
\item for each $z \in \partial A$ and any $n \in N_{A}(z), \mid n \mid=1$, the ball of center $z-R n$ and radius $R$ contains $A$, that is $\mid z - R n - x \mid \leq R$ for each $x \in A$,
\item for each $z\in \partial A$ and any vector $n\in N_{A}(z)$ with $|n|=1$, we have the inequality
\begin{equation}
|z-x|\leq \sqrt{2 R} \langle z-x,n \rangle^{\frac{1}{2}},\ \forall x\in A.
\end{equation} 
\end{enumerate}
\end{proposition}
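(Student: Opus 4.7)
My plan is to prove the five conditions equivalent through the core chain $(4)\Leftrightarrow(5)\Rightarrow(1)\Rightarrow(4)\Leftrightarrow(2)$, and then tie in $(3)$ by $(2)\Rightarrow(3)\Rightarrow(1)$. The starting point $(4)\Leftrightarrow(5)$ is pure algebra: expanding $|x-(z-Rn)|^{2}$ shows that $A\subset B(z-Rn,R)$ is identical to $|z-x|^{2}\le 2R\langle z-x,n\rangle$, and taking a square root gives (5). I would record this first and then move freely between the two formulations.

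For $(5)\Rightarrow(1)$, I pick two boundary points $z,y$ with unit normals $n,m$, apply (5) to the pair $(z,n)$ at $x=y$ and to $(y,m)$ at $x=z$, sum the resulting squared inequalities, and read off $2|z-y|^{2}\le 2R\langle z-y,n-m\rangle$; Cauchy--Schwarz then delivers $(1)$. The converse $(1)\Rightarrow(4)$ is where I expect the main work: fix $z\in\partial A$ and a unit $n\in N_{A}(z)$, reduce to showing the inequality for $x\in\partial A$ with some outer unit normal $m$ via a supporting-hyperplane and density argument, and then combine the bound $|z-x|^{2}\le R^{2}|n-m|^{2}=2R^{2}(1-\langle n,m\rangle)$ from (1) with the sign information $\langle n,z-x\rangle\ge 0$ and $\langle m,z-x\rangle\le 0$ coming from normal-cone definitions, to arrive at $|z-x|^{2}\le 2R\langle z-x,n\rangle$.

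For $(4)\Leftrightarrow(2)$: the direction $(4)\Rightarrow(2)$ is immediate using the family $\mathcal{F}=\{B(z-Rn,R):z\in\partial A,\ n\in N_{A}(z),\ |n|=1\}$; given any $y\notin A$, the metric projection $z_{y}\in\partial A$ onto the closed convex set $A$ together with the outer unit vector $n=(y-z_{y})/|y-z_{y}|$ produces a member of $\mathcal{F}$ not containing $y$. The converse $(2)\Rightarrow(4)$ requires more care, since being an intersection of some family of balls of radius $R$ does not \emph{a priori} provide balls touching $A$ at a prescribed $(z,n)$: I would approximate $z$ from outside along $n$, pick for each approximant a ball in the given family that excludes it, and pass to a limit of the centers to obtain the canonical ball $B(z-Rn,R)$. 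Finally, $(2)\Rightarrow(3)$ is geometric: a short circular arc of radius $R$ through two boundary points is contained in the intersection of the two radius-$R$ balls that have this arc on their boundary, and these balls already appear in the representation; $(3)\Rightarrow(1)$ follows by letting two boundary points collide along $\partial A$, so that the short-arc containment degenerates into the Lipschitz bound on normals.

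The main obstacle is $(2)\Rightarrow(4)$: turning an abstract representation as an intersection of balls into the existence of \emph{specific} supporting balls at each boundary point requires a compactness/selection argument that is easy to sketch but technical to carry out cleanly, particularly at corner points of $\partial A$ where the normal cone is not a single ray. The passage $(1)\Rightarrow(4)$ is also delicate because it promotes a two-point condition about boundary normals to a one-sided containment valid for every point of $A$; here the classical characterization of the normal cone via supporting hyperplanes is the key device.
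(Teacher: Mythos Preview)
The paper does not prove this proposition; it is quoted without proof from \cite[Proposition~3.1]{MR648458}, so there is no argument in the paper to compare against. Evaluating your sketch on its own: the equivalence $(4)\Leftrightarrow(5)$, the implication $(5)\Rightarrow(1)$, and $(4)\Rightarrow(2)$ are correct as you describe them.

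The step $(1)\Rightarrow(4)$ has a genuine gap. You propose to take an arbitrary $x\in\partial A$ with a unit normal $m$ and to combine $|z-x|^{2}\le 2R^{2}(1-\langle n,m\rangle)$ with the sign information $\langle n,z-x\rangle\ge0$, $\langle m,z-x\rangle\le0$ to obtain $|z-x|^{2}\le 2R\langle z-x,n\rangle$. This chain fails: for $A$ a disk of radius $r<R$, with $z=rn$ and $x=rm$ on $\partial A$, one has $2R^{2}(1-\langle n,m\rangle)>2Rr(1-\langle n,m\rangle)=2R\langle z-x,n\rangle$, so the second inequality is false even though the conclusion $|z-x|^{2}=2r^{2}(1-\langle n,m\rangle)\le 2R\langle z-x,n\rangle$ holds. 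The missing idea is to choose $x$ as the point of $A$ \emph{farthest} from $c:=z-Rn$; then $m:=(x-c)/\rho$ with $\rho:=|x-c|$ is a unit normal at $x$, one has the extra identity $z-x=Rn-\rho m$, and substituting this into $|z-x|\le R|n-m|$ gives $(\rho-R)(\rho+R)\le 2R(\rho-R)\langle n,m\rangle$, forcing $\rho\le R$.

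Two further issues. For $(2)\Rightarrow(3)$, the arc lies on the boundary of a \emph{single} ball of radius $R$, and there is no reason that ball belongs to the given family; the correct lemma is that a short arc of radius $R$ joining two points of \emph{any} closed ball of the same radius $R$ stays in that ball (on the circle carrying the arc, $p\mapsto|p-c'|^{2}-R^{2}$ equals $|c_{0}-c'|^{2}+2R\langle e(\theta),c_{0}-c'\rangle$, a sinusoid whose nonpositive set is an arc of angular length at most $\pi$ because the constant term $|c_{0}-c'|^{2}$ is nonnegative). For $(3)\Rightarrow(1)$, ``letting two boundary points collide'' yields at best an infinitesimal curvature bound, not the global Lipschitz estimate $|z-y|\le R|n-m|$; you need a direct argument from (3) to one of the global conditions.
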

$R$-convex sets are obviously convex. Moreover, the boundary of an $R$-convex set $A$  satisfies a generalized lower bound for the curvature, even though $\partial A$ may be a nonsmooth set. Indeed, for every point $x \in \partial A$ there exists a closed ball $B_x$ of radius $R$ such that $x\in \partial B_x$ and $A \subset B_x$. This fact suggests that, in some sense, the curvature of $\partial A$ is bounded below by $1/R$.

\begin{definition}
A multifunction $F:\mathbb{R}^n \rightrightarrows \mathbb{R}^n $ is \emph{locally strongly convex} if for each compact set $K\subset\mathbb{R}^n$ there exists $R>0$ such that $F(x)$ is $R$-convex for every $x\in K$.
\end{definition}
We can reformulate the above property of $F$ in an equivalent Hamiltonian form. Here, we denote by $F_p(x)$ the argmax set of $\langle v,p\rangle$ over $v\in F(x)$. The existence of $\nabla_p H(x,p)$ is equivalent to the fact that the set $F_p(x)$ is the singleton $\lbrace \nabla_p H(x,p)\rbrace$. 
\[
\mbox{\textbf{(H2)}}\left\{
\begin{array}{l}
\mbox{For every compact } K \subset \mathbb{R}^n, \mbox{ there exists a constant }	\\ c'=c'(K)>0 \mbox{ such that for all } x \in K,\ p \in \mathbb{R}^n,\mbox{ we have}: \\
v_p\in F_p(x) \Rightarrow  \langle v- v_p ,p \rangle \leq - c' | p | |v-v_p|^{2},\ \forall v \in F(x).
\end{array}
\right.
\] 
In the next lemma we show that the local strong convexity of $F$ is equivalent to assumption (H2) for the associated Hamiltonian, giving also a result connecting (H2) with the regularity of $H$. 
\begin{lemma} 
Suppose $F:\mathbb{R}^n \rightrightarrows \mathbb{R}^n$ is a multifunction satisfying
(SH). Let $K$ be any convex and compact subset of $\mathbb{R}^n$. Then
\begin{enumerate}
\item (H2) holds with a constant $c'$ on $K$ if and only if $F(x)$ satisfies the $R$-convexity property for all $x\in K$ with radius $R=(2 c')^{- 1}$. 
\item  If (H2) holds, then $\nabla_p H(x,p)$ exists for all $x \in K$ and $p\in \mathbb{R}^n\smallsetminus\lbrace 0 \rbrace$ and is H\"older continuous in $x$ on $K$ with exponent $1/2$, uniformly for $p$ in any compact subset of $\mathbb{R}^n \smallsetminus \lbrace 0 \rbrace$.
\end{enumerate}
\end{lemma}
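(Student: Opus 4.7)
The plan is to treat each part in turn; both rest on characterization (5) of Proposition \ref{equiv}, which translates the geometric $R$-convexity of a compact convex set into a quadratic inequality along outer normals.

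For part (1), forward direction, assume (H2) holds with constant $c'$ on $K$, and fix $x\in K$, $z\in\partial F(x)$, and a unit vector $n\in N_{F(x)}(z)$. Since $n$ is an outer normal at $z$, the point $z$ maximizes $\langle\cdot,n\rangle$ over $F(x)$, i.e., $z\in F_n(x)$. Applying (H2) with $p=n$ and $v_p=z$ yields $\langle v-z,n\rangle\leq -c'|v-z|^{2}$ for every $v\in F(x)$, which rearranges to $|v-z|^{2}\leq (1/c')\langle z-v,n\rangle=2R\langle z-v,n\rangle$. This is exactly item (5) of Proposition \ref{equiv}, so $F(x)$ is $R$-convex. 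Conversely, assume $R$-convexity of $F(x)$ with $R=(2c')^{-1}$ for every $x\in K$, and let $v_p\in F_p(x)$ with $p\neq 0$ (the case $p=0$ being trivial). Setting $n=p/|p|$, the point $v_p$ maximizes $\langle\cdot,n\rangle$ over $F(x)$, hence $v_p\in\partial F(x)$ and $n\in N_{F(x)}(v_p)$. Item (5) of Proposition \ref{equiv} then gives $|v-v_p|^{2}\leq 2R\langle v_p-v,n\rangle=(1/c')\langle v_p-v,n\rangle$ for all $v\in F(x)$; multiplying by $c'|p|$ and using $p=|p|n$ recovers (H2).

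For part (2), existence of $\nabla_p H(x,p)$ at $p\neq 0$ amounts to showing $F_p(x)$ is a singleton: if $v_1,v_2\in F_p(x)$, then $\langle v_2-v_1,p\rangle=0$, and (H2) with $v_p=v_1$, $v=v_2$ forces $v_1=v_2$. For H\"older continuity, fix a compact set $K\subset\mathbb{R}^n$, let $L$ be a Lipschitz constant for $F$ on a neighborhood of $K$, fix $x,y\in K$ and $p\neq 0$, and write $v_x=\nabla_p H(x,p)$, $v_y=\nabla_p H(y,p)$. By the local Lipschitzianity of $F$ choose $w_y\in F(y)$ with $|v_x-w_y|\leq L|x-y|$. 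Applying (H2) at $y$ with $v_p=v_y$ and $v=w_y$ gives $\langle v_y-w_y,p\rangle\geq c'|p|\,|v_y-w_y|^{2}$. On the other hand, $H(\cdot,p)$ is $L|p|$-Lipschitz (an elementary consequence of the Lipschitzianity of $F$), so
\begin{equation*}
\langle v_y-w_y,p\rangle=H(y,p)-\langle w_y,p\rangle\leq H(y,p)-H(x,p)+L|p|\,|x-y|\leq 2L|p|\,|x-y|.
\end{equation*}
Combining these two inequalities cancels the factor $|p|$, producing $|v_y-w_y|^{2}\leq(2L/c')|x-y|$, and the triangle inequality then yields $|v_x-v_y|\leq L|x-y|+\sqrt{2L/c'}\,|x-y|^{1/2}$. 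Boundedness of $K$ absorbs the linear term and delivers H\"older continuity of exponent $1/2$ with a constant that depends only on $K$ and $c'$, not on $p$, which is actually stronger than the uniformity claimed.

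The one delicate point is the cancellation of $|p|$: the estimate on $\langle v_y-w_y,p\rangle$ picks up exactly one factor of $|p|$, which matches the factor $c'|p|$ in the quadratic lower bound provided by (H2). The use of a nearby point $w_y\in F(y)$ rather than $v_x$ itself (which in general does not belong to $F(y)$) is what permits (H2) to be invoked at $y$ in the first place.
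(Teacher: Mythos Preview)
Your proof is correct. For part~(1) your argument is essentially the paper's: both directions reduce immediately to characterization~(5) of Proposition~\ref{equiv} via the observation that boundary points and unit outer normals correspond exactly to argmax points and normalized directions.

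For part~(2) you go further than the paper, which simply cites \cite[Proposition~4]{MR2728465}. Your argument is sound and has the merit of being self-contained. The key idea---approximate $v_x=\nabla_pH(x,p)$ by a point $w_y\in F(y)$ via the Lipschitz property of $F$, then apply (H2) at $y$ to control $|v_y-w_y|$---is the standard one, and your bookkeeping is clean: the upper bound on $\langle v_y-w_y,p\rangle$ carries exactly one factor of $|p|$, matching the factor in (H2), so the cancellation you highlight is legitimate. Your remark that the resulting H\"older constant depends only on $K$ (through $L$ and $c'$) and not on $p$ at all is correct and indeed slightly sharper than the stated uniformity over compact subsets of $\mathbb{R}^n\setminus\{0\}$.
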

\begin{proof}
For all $x\in K$ and $v \in \partial F(x)$, we have $v\in F_{y_v}(x)$ for all $y_v \in N_{F(x)}(v)$. Therefore, (H2) holds with constant $c'$ on $K$ if and only if for any $y_v\in N_{F(x)}(v)$ with $\mid y_v \mid=1$ we have 
\[ \langle  v - \overline{v} ,y_v \rangle \geq  c' |v-\overline{v} |^{2},\ \forall \overline{v} \in F(x), \]
or equivalently, 
\[
\mid v- \overline{v} \mid\leq  \sqrt{ 2 \frac{1}{2 c'}} \langle v-\overline{v}, y_v \rangle^{\frac{1}{2}},\ \forall \overline{v} \in F(x),
\]
for all $y_v \in N_{F(x)}(v)$ with $\mid y_v \mid=1$. By Proposition \ref{equiv}, this is equivalent to the $(2c')^{-1}-$convexity of $F(x)$ for each $x \in K$. For the proof of the second statement we refer to \cite[Proposition~4]{MR2728465}.
\end{proof}
The second statement of the above lemma is not an equivalence, in general, as is shown by the example below.  Moreover, assumption (H1) does not follow from (H2).

\begin{example}
Let us denote by $M \subset \mathbb{R}^2$ the intersection of the epigraph of the function $f:\mathbb{R}\rightarrow\mathbb{R},\ f(x)=x^4$, and the closed ball $B(0,R),~R>0$. Let us consider the multifunction $F:\mathbb{R}^2 \rightrightarrows\mathbb{R}^2$ that associates set $M$ with any $x\in \mathbb{R}^2$. Observe that $M$ fails to be strongly convex, since the curvature at $x=0$ is equal to zero. Moreover, since $M$ is a closed convex set and its boundary contains no line, the argmax of $\langle v,p \rangle$ over $v\in M$ is a singleton for each $p\neq 0$. So, the Hamiltonian $H(p)=\sup_{v\in M} \langle v,p\rangle$ is differentiable for each $p\neq 0$. Note that the gradient $\nabla_p H$ is constant with respect to the $x$ variable. Consequently, the Hamiltonian satisfies (H1) and not (H2).
\end{example}

\section{Sensitivity relations}

In this section we discuss sensitivity relations. First we prove, under suitable assumptions, the validity of the partial sensitivity relations and then the full sensitivity relations, involving both Fr\'echet and proximal superdifferential of the value function. 
\begin{theorem}\label{Lemma_super_nonprox}
Assume (SH), (H1) and let $\phi: \mathbb{R}^n \rightarrow \mathbb{R}$ be locally Lipschitz. Let $\overline{x}:[t_0,T]\rightarrow \mathbb{R}^n$ be an optimal solution for $\mathcal{P}(t_0,x_0)$ and $\overline{p}:[t_0,T]\rightarrow \mathbb{R}^n$ be any solution of the differential inclusion
\begin{equation}
\left\{\begin{array}{rll}
-\dot{p}(t)&\in&\partial_x^- H(\overline{x}(t),p(t)) \\
\dot{\overline{x}}(t)&\in&\partial_p H(\overline{x}(t),p(t)) 
\end{array}\right. 
\quad \mbox{ a.e. in } \left[ t_0, T \right],
\end{equation}
with the transversality condition
\begin{equation}\label{dualarc2}
-\overline{p}(T)\in \partial^{+,pr} \phi(\overline{x}(T)).
\end{equation}
Then, there exist constants $c_0,r>0$ such that, for all $t\in[t_0,T]$ and all $h \in B(0,r)$, 
\begin{equation}\label{iclu_super}
V(t,\overline{x}(t)+h)-V(t,\overline{x}(t)) \leq\ \langle -\overline{p}(t),h\rangle + c_0  \mid h \mid^2. 
\end{equation} 
Consequently, $\overline{p}(\cdot)$ satisfies the proximal partial sensitivity relation 
\begin{equation}\label{PPSR}
-\overline{p}(t)\in \partial_x^{+,pr}V(t,\overline{x}(t))\mbox{ for all }t\in[t_0,T].
\end{equation}
\end{theorem}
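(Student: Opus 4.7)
The plan is to build, from each small perturbation $h$, an admissible trajectory $y_h(\cdot)$ starting at $\overline{x}(t)+h$, bound $V(t,\overline{x}(t)+h)$ above by $\phi(y_h(T))$ via dynamic programming, compare $\phi(y_h(T))$ to $\phi(\overline{x}(T))$ through the proximal superdifferential hypothesis (\ref{dualarc2}), and finally transport $\langle -\overline{p}(T),y_h(T)-\overline{x}(T)\rangle$ back to $\langle -\overline{p}(t),h\rangle$ modulo a quadratic remainder by an ODE argument along $[\,t,T\,]$. By the dichotomy of Remark \ref{RemarkDualArc} (whose Grönwall argument uses only $\partial_x^-H\subset\partial_x H$ and the bound (\ref{Impoooo})), I may treat separately the cases $\overline{p}\not\equiv 0$ and $\overline{p}\equiv 0$.

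In the main case $\overline{p}(s)\neq 0$ for all $s$, Lemma \ref{LemmmaLip} provides, for every $h$ in a small ball, a unique $C^1$ curve $y_h(\cdot)$ solving $\dot{y}(s)=\nabla_p H(y(s),\overline{p}(s))$, $y_h(t)=\overline{x}(t)+h$, with $|y_h(s)-\overline{x}(s)|\le e^{k(T-t)}|h|$; by uniqueness $\overline{x}(\cdot)=y(\cdot;t,\overline{x}(t))$. Since $\nabla_pH(y,\overline{p})\in F(y)$, each $y_h$ is admissible, so the dynamic programming principle gives $V(t,\overline{x}(t)+h)\le\phi(y_h(T))$, while optimality of $\overline{x}$ gives $V(t,\overline{x}(t))=\phi(\overline{x}(T))$. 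Choosing $|h|$ small enough that $y_h(T)$ lies in the proximal neighborhood prescribed by (\ref{dualarc2}), I obtain
\[
\phi(y_h(T))-\phi(\overline{x}(T))\le\langle -\overline{p}(T),\,y_h(T)-\overline{x}(T)\rangle+C_1|y_h(T)-\overline{x}(T)|^2.
\]

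The step I expect to be the heart of the proof is estimating $\langle -\overline{p}(T),y_h(T)-\overline{x}(T)\rangle$. I would differentiate $s\mapsto\langle-\overline{p}(s),y_h(s)-\overline{x}(s)\rangle$: using (\ref{derive}) at both $y_h(s)$ and $\overline{x}(s)$, the ``velocity'' term collapses to $H(\overline{x}(s),\overline{p}(s))-H(y_h(s),\overline{p}(s))$; using $-\dot{\overline{p}}(s)\in\partial_x^-H(\overline{x}(s),\overline{p}(s))$ together with the semiconvexity in $x$ of $H(\cdot,\overline{p}(s))$ with constant $c|\overline{p}(s)|$ (assumption (H1)(i) and the semiconvex analogue of Proposition \ref{PropSemiconc}(1)), the cross term $\langle-\dot{\overline{p}}(s),y_h(s)-\overline{x}(s)\rangle$ is controlled by $H(y_h(s),\overline{p}(s))-H(\overline{x}(s),\overline{p}(s))+c|\overline{p}(s)|\,|y_h(s)-\overline{x}(s)|^2$. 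The cancellation leaves
\[
\frac{d}{ds}\langle-\overline{p}(s),y_h(s)-\overline{x}(s)\rangle\le c|\overline{p}(s)|\,|y_h(s)-\overline{x}(s)|^2\le C_2|h|^2,
\]
and integrating on $[t,T]$ gives $\langle-\overline{p}(T),y_h(T)-\overline{x}(T)\rangle\le\langle-\overline{p}(t),h\rangle+C_3|h|^2$. Combining the three inequalities yields (\ref{iclu_super}) with a constant $c_0$ uniform in $t\in[t_0,T]$, and the proximal partial sensitivity relation (\ref{PPSR}) is then a direct reading of the definition of $\partial^{+,pr}_xV(t,\overline{x}(t))$.

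For the degenerate case $\overline{p}\equiv 0$, Lemma \ref{LemmmaLip} is unavailable, so I would appeal instead to a Filippov-type trajectory tracking: the local Lipschitzianity of $F$ (SH)(ii) around $\overline{x}([t_0,T])$ provides, for every small $h$, an admissible $y_h$ with $y_h(t)=\overline{x}(t)+h$ and $|y_h(T)-\overline{x}(T)|\le L|h|$. Since then $0\in\partial^{+,pr}\phi(\overline{x}(T))$, the proximal superdifferential inequality alone gives $\phi(y_h(T))-\phi(\overline{x}(T))\le C|h|^2$, and (\ref{iclu_super}) follows at once with $\overline{p}(t)=0$, completing the proof in both cases.
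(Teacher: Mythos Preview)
Your proposal is correct and follows essentially the same approach as the paper's proof: the same dichotomy on $\overline{p}$, the same construction of the comparison trajectory $y_h$ via $\dot y=\nabla_pH(y,\overline p)$ in the nonvanishing case (and Filippov in the vanishing case), the same use of dynamic programming plus the proximal inequality for $\phi$, and the same differentiation of $s\mapsto\langle-\overline p(s),y_h(s)-\overline x(s)\rangle$ with the cancellation coming from the identity (\ref{derive}) and the semiconvexity bound (H1)(i). The only cosmetic difference is that the paper isolates your ``heart of the proof'' estimate as a separate technical lemma (Lemma~\ref{LemmaTecnico}), whereas you carry it out inline.
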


To prove the above theorem, we need the following lemma.
\begin{lemma}\label{LemmaTecnico}
Assume $\overline{p}(T)\neq 0$ and fix $t\in[t_0,T)$. For each $h\in B(0,1)$, let $x_{h}:[t,T]\rightarrow \mathbb{R}^n$ be the solution of the problem
\begin{equation}\label{ennesimoBo}
\left\{\begin{array}{l}
\dot{x}(s)= \nabla_p H(x(s),\overline{p}(s)), \quad  s\in \left[ t, T \right],\\
x(t)=\overline{x}(t)+h.
\end{array}\right.
\end{equation}
Then, there exist constants $c_1, K_1$, independent of $t\in[t_0,T)$, such that
\begin{equation}\label{Booooo3}
\parallel x_h - \overline{x} \parallel_{\infty} \leq e^{K_1 T }\mid h \mid,
\end{equation} 
and 
\begin{equation}\label{cambia2}
\langle \overline{p}(t),h\rangle  + \langle -\overline{p}(T),x_{h}(T)-\overline{x}(T)\rangle \leq c_1 \mid h \mid^2.
\end{equation}
\end{lemma}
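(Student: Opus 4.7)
The plan is to apply Lemma \ref{LemmmaLip} to obtain \eqref{Booooo3}, and then to exploit the semiconvexity of $x \mapsto H(x,\overline{p}(s))$ together with the adjoint inclusion to obtain \eqref{cambia2}.

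For \eqref{Booooo3}, I would first observe that the assumption $\overline{p}(T)\neq 0$ combined with Remark \ref{RemarkDualArc} forces $\overline{p}(s)\neq 0$ for every $s\in[t_0,T]$, and \eqref{Gronwall} delivers a uniform bound $|\overline{p}(s)|\leq C_p$ on $[t_0,T]$. By Remark \ref{RemarkContinuity}, $\overline{x}$ solves $\dot{\overline{x}}(s)=\nabla_p H(\overline{x}(s),\overline{p}(s))$ on the entire interval. Hence both $\overline{x}$ and $x_h$ are solutions of the Cauchy problem \eqref{Cau} (with the same $\overline{p}$), starting respectively from $\overline{x}(t)$ and $\overline{x}(t)+h$. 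The estimate in Lemma \ref{LemmmaLip} then yields \eqref{Booooo3} with $K_1$ equal to the constant $k$ appearing there; confining all admissible trajectories to a single compact set (possible by (SH)(iii) and Gronwall, since $h\in B(0,1)$) makes $K_1$ independent of $t$.

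For \eqref{cambia2}, the key idea is to introduce the absolutely continuous scalar function
\[
g(s):=\langle \overline{p}(s),\,x_h(s)-\overline{x}(s)\rangle,\qquad s\in[t,T],
\]
so that \eqref{cambia2} reads $g(t)-g(T)\leq c_1|h|^2$. Differentiating and using identity \eqref{derive}, which gives $\langle \overline{p}(s),\nabla_p H(y,\overline{p}(s))\rangle=H(y,\overline{p}(s))$ for $y=x_h(s)$ and $y=\overline{x}(s)$, one finds
\[
\dot g(s)=\langle \dot{\overline{p}}(s),x_h(s)-\overline{x}(s)\rangle+H(x_h(s),\overline{p}(s))-H(\overline{x}(s),\overline{p}(s)).
\]
The adjoint inclusion gives $-\dot{\overline{p}}(s)\in\partial_x^- H(\overline{x}(s),\overline{p}(s))$ a.e., while (H1)(i) asserts that $x\mapsto H(x,\overline{p}(s))$ is semiconvex on our compact set with constant $c|\overline{p}(s)|$. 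The quadratic characterization of Fr\'echet subgradients of semiconvex functions (the reversed form of part 1 of the Proposition in Section 2.2) then yields
\[
H(x_h(s),\overline{p}(s))-H(\overline{x}(s),\overline{p}(s))\geq \langle -\dot{\overline{p}}(s),x_h(s)-\overline{x}(s)\rangle - c|\overline{p}(s)|\,|x_h(s)-\overline{x}(s)|^2,
\]
which combined with the expression for $\dot g$ gives the pointwise (a.e.) lower bound $\dot g(s)\geq -c|\overline{p}(s)|\,|x_h(s)-\overline{x}(s)|^2$. Integrating on $[t,T]$ and invoking \eqref{Booooo3} together with $|\overline{p}(s)|\leq C_p$ produces \eqref{cambia2} with $c_1:=cC_p e^{2K_1 T}(T-t_0)$, a constant independent of $t$.

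The only delicate point, rather than a genuine obstacle, is the uniformity of the constants $K_1$, $C_p$, $c$ with respect to $t\in[t_0,T)$ and $h\in B(0,1)$. This is resolved once we fix at the outset a compact convex set $K$ containing every curve $x_h([t,T])$ and $\overline{x}([t_0,T])$, on which (H1)(i) provides the semiconvexity constant $c$, (SH) provides the Lipschitz constants of $F$ and hence the dual-arc bound \eqref{Gronwall}, and Lemma \ref{LemmmaLip} provides $K_1$. With these uniform constants in hand, the two assertions follow by the Gronwall-type comparison and the semiconvexity argument described above.
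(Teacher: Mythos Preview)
Your proposal is correct and follows essentially the same route as the paper: invoke Remark~\ref{RemarkDualArc} and Lemma~\ref{LemmmaLip} for \eqref{Booooo3}, then differentiate $s\mapsto\langle \overline{p}(s),x_h(s)-\overline{x}(s)\rangle$, use identity \eqref{derive} to handle the $\dot x$-terms, and apply the semiconvexity inequality for $\partial_x^- H$ to absorb the remaining piece into a quadratic term. The paper organizes the same computation by splitting the integral into the two terms $(I)$ and $(II)$ rather than bounding $\dot g$ directly, but the ingredients and the logic are identical.
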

\begin{proof}
Thanks to Remark \ref{RemarkDualArc}, we have that $\overline{p}(t)\neq 0$ for all $t\in [t_0,T]$. Hence, $\overline{x}(\cdot)$ is the unique solution of the Cauchy problem
\begin{equation}\label{AAAdu}
\left\{\begin{array}{l}
\dot{x}(s)= \nabla_p H(x(s),\overline{p}(s))\ \mbox{ for all } s\in \left[ t, T \right],\\
x(t)=\overline{x}(t).
\end{array}\right.
\end{equation}
Since $F$ has sublinear growth and $\nabla_p H(\cdot,p)$ is locally Lipschitz, by standard arguments based on Gronwall's Lemma we conclude that there exists a constant $K_1$, independent of $t\in[t_0,T)$, such \eqref{Booooo3} holds true.
In order to prove \eqref{cambia2}, note that
\[
\langle \overline{p}(t),h\rangle  + \langle -\overline{p}(T),x_{h}(T)-\overline{x}(T)\rangle = \int_{t}^T \frac{d}{d s} \langle -\overline{p}(s), x_{h}(s)-\overline{x}(s)\rangle ds  
\]
\[ 
= \int_{t}^T  \langle -\dot{\overline{p}}(s), x_{h}(s)-\overline{x}(s)\rangle ds\ + \int_{t}^T  \langle -\overline{p}(s), \dot{x}_{h}(s)-\dot{\overline{x}}(s)\rangle \ d s := (I) + (II). 
\]
Since $-\dot{\overline{p}}(s)\in \partial_x^- H(\overline{x}(s),\overline{p}(s))$ a.e. in $[t_0,T]$, we obtain
\[ (I)\leq \int_{t}^T  \left( c \mid \overline{p}(s)\mid\cdot \mid x_{h}(s)-\overline{x}(s)\mid^2+ H(x_{h}(s),\overline{p}(s))-H(\overline{x}(s),\overline{p}(s)) \right) \ d s 
\]
\[ \leq  \int_{t}^T  \left( c \mid\overline{p}(s)\mid\cdot\mid h \mid^2 e^{2 K_1 T}+ H(x_{h}(s),\overline{p}(s))-H(\overline{x}(s),\overline{p}(s))\right) \ d s,
\]
for some nonnegative constant $c$. Now recalling (\ref{derive}), (\ref{ennesimoBo}) and \eqref{AAAdu}, we get
\[ (II)=\int_{t}^T  \langle -\overline{p}(s), \nabla_p H(x_{h}(s),\overline{p}(s)) -\nabla_p H( \overline{x}(s),\overline{p}(s))\rangle ds  \]
\[
= \int_{t}^T   \left( - H(x_{h}(s),\overline{p}(s))+ H(\overline{x}(s),\overline{p}(s))\right) ds.
\]
Adding up the previous relations, it follows that there exists a constant $c_1$, independent of $t\in[t_0,T)$, such that \eqref{cambia2} holds true.
The lemma is proved.
\end{proof} 
\begin{proof}[Proof of Theorem \ref{Lemma_super_nonprox}]
First note that the estimate \eqref{iclu_super} is immediate for $t=T$. Let us observe that in view of Remark \ref{RemarkDualArc} the above dual arc $\overline{p}(\cdot)$ satisfies the following:
\begin{enumerate}
\item[(i)] either $\overline{p}(t)\neq 0$ for all $t\in [t_0,T]$,
\item[(ii)] or $\overline{p}(t)=0$ for all $t\in [t_0,T]$.
\end{enumerate}
We shall analyze each of the above cases separately. Suppose, first, that $\overline{p}(t)\neq 0$ for all $t\in [t_0,T]$ and fix $t \in [t_0,T)$. Then $\overline{x}(\cdot)$ is the unique solution of the Cauchy problem
\begin{equation}\label{AAA}
\left\{\begin{array}{l}
\dot{x}(s)= \nabla_p H(x(s),\overline{p}(s)) \ \mbox{ for all } s\in \left[ t, T \right],\\
x(t)=\overline{x}(t).
\end{array}\right.
\end{equation}
For each $h\in B(0,1)$, let $x_{h}(\cdot)$ be the solution of the problem \eqref{ennesimoBo}.
By the optimality of $\overline{x}(\cdot)$, the very definition of the value function, and the dynamic programming principle, we have that, for all $h\in B(0,1)$,
\begin{equation}\label{prima}
V(t,\overline{x}(t)+h)-V(t,\overline{x}(t)) +\langle \overline{p}(t),h \rangle \leq \phi(x_{h}(T))-\phi(\overline{x}(T))+\langle\overline{p}(t),h \rangle. 
\end{equation}
Moreover, owing to (\ref{dualarc2}), there exist constants $c,R>0$ so that, for all $z \in B(0,R)$,
\begin{equation}\label{Nuovo1}
\phi(\overline{x}(T)+z)-\phi(\overline{x}(T))\leq \langle -\overline{p}(T),z \rangle+c |z|^2.
\end{equation}
Observe that, by (\ref{Booooo3}), 
\begin{equation}\label{BBB}
\parallel x_h - \overline{x} \parallel_{\infty}<R,\ \forall\ h\in\mathbb{R}^n \text{ such that } \mid h \mid<r_1:=\min \lbrace 1, R e^{-K_1 T}\rbrace.
\end{equation}
Then, on account of (\ref{prima}), (\ref{Nuovo1}) and (\ref{BBB}), we conclude that for each $h\in B(0,r_1)$,
\begin{equation}\label{cambia}
\begin{split}
V(t,&\overline{x}(t)+h)-V(t,\overline{x}(t)) +\langle \overline{p}(t),h\rangle  
\\
&\leq \langle \overline{p}(t),h\rangle  + \langle -\overline{p}(T),x_{h}(T)-\overline{x}(T)\rangle  + c \mid x_{h}(T)-\overline{x}(T)\mid^2.
\end{split}
\end{equation}
Therefore, in view of (\ref{cambia}) and Lemma \ref{LemmaTecnico}, there exists a constant $c_2$, independent of $t$, so that for all $h \in B(0,r_1)$ and $t\in [t_0,T)$,
\begin{equation}\label{iclu_super2}
V(t,\overline{x}(t)+h)-V(t,\overline{x}(t))+ \langle \overline{p}(t),h\rangle \leq  c_2  \mid h \mid^2. 
\end{equation} 
The proof of (\ref{iclu_super}) in the case (i) is complete. \\
Next, suppose we are in case (ii), that is $p(t)=0$ for all $t\in [t_0,T]$. Let $t\in[t_0,T)$ be fixed. Then, by Filippov's Theorem (see, e.g., Theorem 10.4.1 in \cite{MR1048347}), there exist constants $r_2, K_2$, independent of $t\in[t_0,T]$, such that, for any $h \in\mathbb{R}^n$ with $\mid h \mid \leq r_2$, the initial value problem
\begin{equation}\label{lastAga2}
\left\lbrace
\begin{array}{l}
\dot{x}(s)\in F(x(s))\ \textit{ a.e. in } [t,T],\\
x(t)=\overline{x}(t)+h. 
\end{array}
\right.
\end{equation}
has a solution, $x_h(\cdot)$, that satisfies the inequality
\begin{equation}\label{Booooo3star}
\parallel x_h - \overline{x} \parallel_{\infty} \leq e^{K_2 T }\mid h \mid.
\end{equation} 
By the optimality of $\overline{x}(\cdot)$, the very definition of the value function, and the dynamic programming principle it follows that
\begin{equation}\label{Nuovo7}
V(t,\overline{x}(t)+h)-V(t,\overline{x}(t)) \rangle \leq \phi(x_{h}(T))-\phi(\overline{x}(T)). 
\end{equation}
Moreover, owing to (\ref{dualarc2}) an recalling that $\overline{p}(\cdot)$ is equal to zero at each point, there exist constants $c,R>0$ so that, for all $z \in B(0,R)$,
\begin{equation}\label{Nuovo8}
\phi(\overline{x}(T)+z)-\phi(\overline{x}(T))\leq c |z|^2.
\end{equation}
In view of \eqref{Booooo3star}, \eqref{Nuovo7} and \eqref{Nuovo8}, we obtain that there exists a constants $c_4$ such that, for all $t\in [t_0,T]$ and $h\in B(0,r_3)$ whit $r_3:=\min \lbrace r_2, R e^{-K_2 T} \rbrace$, it holds that
\begin{equation}
V(t,\overline{x}(t)+h)-V(t,\overline{x}(t)) \leq\ c_4  \mid h \mid^2. 
\end{equation} 
The proof is complete also in case (ii).
\end{proof}
\begin{remark}\label{RelarkPartialSensRelation}
One can easily adapt the previous proof to show that the above inclusion holds true with the Fr\'echet superdifferential as well, that is if $-\overline{p}(T)\in \partial^{+} \phi(\overline{x}(T))$, then $$-\overline{p}(t)\in \partial_{x}^{+}V(t,\overline{x}(t)) \mbox{ for all } t \in [t_0,T].$$ In this case, the term  $c_0 \mid h \mid^2$ in (\ref{iclu_super}) is replaced by $o (\mid x_h(T)-\overline{x}(T)\mid )$.
\end{remark}
\begin{theorem}\label{FullSensivTheorem}
Assume (SH), (H1) and let $\phi: \mathbb{R}^n \rightarrow \mathbb{R}$ be locally Lipschitz. Let $\overline{x}:[t_0,T]\rightarrow \mathbb{R}^n$ be an optimal solution for the problem $\mathcal{P}(t_0,x_0)$ and $\overline{p}:[t_0,T]\rightarrow \mathbb{R}^n$ be any solution of the differential inclusion
\begin{equation}\label{DualArc}
\left\{\begin{array}{rll}
-\dot{p}(t)&\in&\partial_x^- H(\overline{x}(t),p(t)) \\
\dot{\overline{x}}(t)&\in&\partial_p H(\overline{x}(t),p(t)) 
\end{array}\right. 
\quad \mbox{ a.e. in } \left[ t_0, T \right],
\end{equation}
satisfying the transversality condition
\begin{equation}\label{dualarcStar}
-\overline{p}(T)\in \partial^{+} \phi(\overline{x}(T)).
\end{equation}
Then, $\overline{p}(\cdot)$ satisfies the full sensitivity relation 
\begin{equation}\label{NuovaInclu}
\left( H(\overline{x}(t),\overline{p}(t)),-\overline{p}(t)\right)\in  \partial^+ V(t,\overline{x}(t)) \mbox{ for all } t\in (t_0,T).
\end{equation}
\end{theorem}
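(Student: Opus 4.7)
My plan is to derive the full sensitivity relation \eqref{NuovaInclu} from the Fréchet version of the partial sensitivity relation already noted in Remark \ref{RelarkPartialSensRelation}, by exploiting the dynamic programming principle to handle the time direction. The key observation is that $s \mapsto V(s,\overline{x}(s))$ is constant on $[t_0,T]$ since $\overline{x}$ is optimal, so the nontrivial work in the time variable reduces to expanding $\overline{x}(s)-\overline{x}(t)$ using the maximum principle \eqref{MAXIMUMprincipleEQUATION}.

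Fix $t\in(t_0,T)$ and consider $(s,y)$ with $s\in[t_0,T]$ close to $t$ and $y$ close to $\overline{x}(t)$. I split
\[
V(s,y)-V(t,\overline{x}(t))=\bigl[V(s,y)-V(s,\overline{x}(s))\bigr]+\bigl[V(s,\overline{x}(s))-V(t,\overline{x}(t))\bigr],
\]
where the second bracket vanishes by DPP. By Remark \ref{RelarkPartialSensRelation} applied at time $s$, the first bracket is bounded above by $\langle -\overline{p}(s),y-\overline{x}(s)\rangle+o(|y-\overline{x}(s)|)$. Using $y-\overline{x}(s)=(y-\overline{x}(t))-(\overline{x}(s)-\overline{x}(t))$, I rearrange the linear part into three pieces:
\[
\langle -\overline{p}(t),y-\overline{x}(t)\rangle+\langle \overline{p}(t)-\overline{p}(s),y-\overline{x}(t)\rangle+\langle \overline{p}(s),\overline{x}(s)-\overline{x}(t)\rangle.
\]
The middle piece is $O(|s-t|)\cdot O(|y-\overline{x}(t)|)$, hence $o(|s-t|+|y-\overline{x}(t)|)$ by AM-GM.

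For the third piece, I write $\overline{p}(s)=\overline{p}(t)+O(|s-t|)$ (absolute continuity), $\overline{x}(s)-\overline{x}(t)=\int_t^s\dot{\overline{x}}(r)\,dr$ (with $\|\dot{\overline{x}}\|_\infty<\infty$ by (SH)), and invoke the maximum principle, which gives $\langle \overline{p}(r),\dot{\overline{x}}(r)\rangle=H(\overline{x}(r),\overline{p}(r))$ a.e. Continuity of $r\mapsto H(\overline{x}(r),\overline{p}(r))$ then yields
\[
\langle \overline{p}(s),\overline{x}(s)-\overline{x}(t)\rangle=H(\overline{x}(t),\overline{p}(t))(s-t)+o(|s-t|).
\]
Combining these estimates and observing that $|y-\overline{x}(s)|\le|y-\overline{x}(t)|+O(|s-t|)$ (so the remainder $o(|y-\overline{x}(s)|)$ is absorbed into $o(|s-t|+|y-\overline{x}(t)|)$), I obtain
\[
V(s,y)-V(t,\overline{x}(t))\le H(\overline{x}(t),\overline{p}(t))(s-t)+\langle -\overline{p}(t),y-\overline{x}(t)\rangle+o(|s-t|+|y-\overline{x}(t)|),
\]
which is precisely the Fréchet superdifferential inequality for $(H(\overline{x}(t),\overline{p}(t)),-\overline{p}(t))$. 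The degenerate case $\overline{p}\equiv 0$ on $[t_0,T]$ follows along the same lines, with $H(\overline{x}(t),0)=0$ so that all linear contributions vanish. The main bookkeeping obstacle is ensuring that each remainder term is of the right uniform order as $(s,y)\to(t,\overline{x}(t))$ for both signs of $s-t$; this uses only the continuity of $H$ along $(\overline{x},\overline{p})$, the absolute continuity of $\overline{p}$, and the essential boundedness of $\dot{\overline{x}}$ on $[t_0,T]$.
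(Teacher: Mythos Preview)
Your argument is correct and takes a genuinely different route from the paper's proof. The paper works with the upper Dini derivative characterization \eqref{DiniDer0}--\eqref{DiniDer}: for each direction $(\alpha,\theta_1)$ it constructs comparison trajectories (via the Hamiltonian flow \eqref{varNuova} in case~(i), via Filippov's theorem in case~(ii)) and bounds $D_{\downarrow}V(t,\overline{x}(t))(\alpha,\theta_1)$ directly using Lemma~\ref{LemmaTecnico} and the transversality condition \eqref{dualarcStar}. You instead reduce the full relation to the partial one of Remark~\ref{RelarkPartialSensRelation}, handling the time direction by the constancy of $s\mapsto V(s,\overline{x}(s))$ and the maximum principle \eqref{MAXIMUMprincipleEQUATION}. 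Your decomposition is more elementary: it avoids Dini derivatives and the explicit construction of variations, and it treats both signs of $s-t$ and both cases $\overline{p}\neq 0$, $\overline{p}\equiv 0$ uniformly. The paper's approach, on the other hand, is self-contained in the sense that it does not pass through the partial relation.

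One point you should make explicit: when you invoke Remark~\ref{RelarkPartialSensRelation} ``at time $s$'' and then let $s\to t$, you are tacitly using that the remainder $o(|y-\overline{x}(s)|)$ is \emph{uniform} in $s\in[t_0,T]$. Remark~\ref{RelarkPartialSensRelation} as stated is pointwise; the uniformity is inherited from the proof of Theorem~\ref{Lemma_super_nonprox} (the $c_1|h|^2$ of Lemma~\ref{LemmaTecnico} is uniform in $t$, and the Fr\'echet remainder coming from $\phi$ at the single point $\overline{x}(T)$ is controlled through the uniform bound \eqref{Booooo3}). With this clarified, your bookkeeping for the three linear pieces and the absorption of the remainders is accurate.
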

\begin{proof}
In view of Remark \ref{RemarkDualArc}, the above dual arc $\overline{p}(\cdot)$ satisfies the following:
\begin{enumerate}
\item[(i)] either $\overline{p}(t)\neq 0$ for all $t\in [t_0,T]$,
\item[(ii)] or $\overline{p}(t)=0$ for all $t\in [t_0,T]$.
\end{enumerate}
Suppose to be in case (i), that is $\overline{p}(t)\neq 0$ for all $t\in [t_0,T]$. Let $t\in(t_0,T)$ be fixed. Hence, $\overline{x}(\cdot)$ is the unique solution of the Cauchy problem
\begin{equation}\label{AA1}
\left\{\begin{array}{l}
\dot{x}(s)= \nabla_p H(x(s),\overline{p}(s))\ \mbox{ for all }  s\in \left[ t, T \right],\\
x(t)=\overline{x}(t).
\end{array}\right.
\end{equation}
Consider now any $(\alpha,\theta)\in\mathbb{R}\times \mathbb{R}^n$ and, for every $\tau>0$, let $x_\tau$ be the solution of the differential equation 
\begin{equation}\label{varNuova}
\left\{\begin{array}{l}
\dot{x}(s)= \nabla_p H(x(s),\overline{p}(s))\ \mbox{ for all } s\in \left[ t, T \right],\\
x(t)=\overline{x}(t)+\tau\theta.
\end{array}\right.
\end{equation}
By \eqref{DiniDer} and \eqref{varNuova}, we have that
\begin{equation}\label{start}
D_{\downarrow} V (t,\overline{x}(t))(\alpha,\alpha \dot{\overline{x}}(t)+ \theta)= \limsup_{\tau \rightarrow 0^+} \frac{V(t+\alpha \tau, x_\tau(t)+ \tau \alpha \dot{\overline{x}}(t)) - V(t, \overline{x}(t))}{\tau}.
\end{equation}
Moreover, from \eqref{AA1} and \eqref{varNuova},
\begin{equation}\label{ppper}
\begin{split}
\mid x_{\tau}(t+\alpha &\tau)- x_{\tau}(t) - \tau \alpha \dot{\overline{x}}(t) \mid \leq
\int_t^{t+\alpha \tau }\left\vert \nabla_p H(x_{\tau}(s),\overline{p}(s))-  \nabla_p H(\overline{x}(t),\overline{p}(t)) \right\vert  d s \\
\leq \int_t^{t+\alpha \tau } &\left\vert \nabla_p H(x_{\tau}(s),\overline{p}(s))-\nabla_p H(\overline{x}(s),\overline{p}(s))\right\vert\ ds \\
&+\int_t^{t+\alpha \tau } \left\vert \nabla_p H(\overline{x}(s),\overline{p}(s))-\nabla_p H(\overline{x}(t),\overline{p}(t))\right\vert  \ d s. 
\end{split}
\end{equation}
By \eqref{Booooo3}, \eqref{ppper}, using also that the map $x \mapsto \nabla_p H(x,p)$ is locally Lipschitz and the map $s \mapsto \nabla_p H(\overline{x}(s),\overline{p}(s))$ is continuous, we conclude that 
\begin{equation}\label{PerLip}
\mid x_{\tau}(t+\alpha \tau)- x_{\tau}(t) - \tau \alpha \dot{\overline{x}}(t) \mid = 
 o( \tau ). 
\end{equation}
Hence, from \eqref{start}, \eqref{PerLip}, using that $V$ is locally Lipschitz, the dynamic programming principle, and the transversality condition \eqref{dualarcStar} we deduce that 
\begin{equation}\label{start2}
\begin{split}
&D_{\downarrow} V (t,\overline{x}(t))(\alpha,\alpha \dot{\overline{x}}(t)+ \theta)\leq \limsup_{\tau \rightarrow 0^+} \frac{V(t+\alpha \tau, x_{\tau}(t+\alpha \tau)) - V(t, \overline{x}(t))}{\tau}\\
&\leq \limsup_{\tau \rightarrow 0^+}\frac{\phi(x_{\tau}(T)) - \phi( \overline{x}(T))}{\tau}\leq \limsup_{\tau \rightarrow 0^+}\frac{\langle - \overline{p}(T), x_{\tau}(T) - \overline{x}(T)\rangle}{\tau}.
\end{split}
\end{equation}
In view of \eqref{cambia2}, the above upper limit does not exceede $\langle - \overline{p}(t), \theta\rangle$. Recalling that $H(\overline{x}(t),\overline{p}(t))=\langle \dot{\overline{x}}(t),\overline{p}(t)\rangle$ we finally obtain 
\begin{equation}
D_{\downarrow} V (t,\overline{x}(t))(\alpha,\alpha \dot{\overline{x}}(t)+ \theta)\leq \alpha H(\overline{x}(t),\overline{p}(t))+\langle-\overline{p}(t), \alpha \dot{\overline{x}}(t)+\theta \rangle.
\end{equation}
Hence, for all $\alpha\in\mathbb{R}$ and $\theta_1\in\mathbb{R}^n$,
\begin{equation}\label{last}
D_{\downarrow} V (t,\overline{x}(t))(\alpha,\theta_1)\leq \alpha H(\overline{x}(t),\overline{p}(t))+\langle-\overline{p}(t), \theta_1 \rangle.
\end{equation}
The proof of \eqref{NuovaInclu}, in the case (i), follows from \eqref{DiniDer0}, \eqref{DiniDer}, and \eqref{last}.\\
Now suppose to be in case (ii), that is $p(s)=0$ for all $s\in [t_0,T]$. Thanks to \eqref{DiniDer0}, \eqref{DiniDer}, and the fact that $H(x,0)=0$ the inclusion \eqref{NuovaInclu} holds true if and only if, for all $(\alpha,\theta)\in\mathbb{R}\times\mathbb{R}^n$,
\begin{equation}
D_{\downarrow} V(t,\overline{x}(t))(\alpha,\theta)\leq 0\quad \mbox{ for all } t\in (t_0,T).
\end{equation}
Let $t\in(t_0,T)$ be fixed. Then, by Filippov's Theorem (see, e.g., Theorem 10.4.1 in \cite{MR1048347}), there exist constants $r, K_1, K_2$ such that, for any $0<\tau< K_1 $, the initial value problem
\begin{equation}\label{lastAga2star}
\left\lbrace
\begin{array}{l}
\dot{x}(s)\in F(x(s))\ \textit{ a.e. in } [t+\alpha \tau,T],\\
x(t+\alpha \tau)=\overline{x}(t)+\tau\theta, 
\end{array}
\right.
\end{equation}
has a solution, $x_{\tau}(\cdot)$, that satisfies the inequality
\begin{equation}\label{Booooo3star2}
\parallel x_{\tau} - \overline{x} \parallel_{\infty}\leq K_2  \tau.
\end{equation} 
Hence, from the dynamic programming principle, (\ref{Booooo3}), \eqref{dualarcStar}, and \eqref{DiniDer} we deduce that 
\begin{equation}\begin{split}
D_{\downarrow} V(t,\overline{x}(t))(\alpha,\theta)&=\limsup_{\tau \rightarrow 0^+} \frac{V(t+\alpha \tau,x_{\tau}(t+\alpha \tau))-V(t,\overline{x}(t))}{\tau}\\
&\leq \limsup_{\tau\rightarrow 0^+} \frac{\phi(x_{\tau}(T)) - \phi(\overline{x}(T))}{\tau}\leq 0.
\end{split}
\end{equation}
Then, the conclusion holds true also in case (ii).
\end{proof}
\begin{remark}
If in addition the map $\nabla_p H(\cdot,\cdot)$ is locally Lipschitz, one can show that the proximal full sensitivity relation
\begin{equation}\label{pppp}
\left( H(\overline{x}(t),\overline{p}(t)),-\overline{p}(t)\right)\in  \partial^{+,pr} V(t,\overline{x}(t)) \mbox{ for all } t\in[t_0,T]
\end{equation}
holds true when $-\overline{p}(T)\in \partial^{+,pr} \phi(\overline{x}(T))$. Note that the full sensitivity relation \eqref{pppp} implies the partial version \eqref{PPSR}. However, in Theorem~\ref{Lemma_super_nonprox} we have proved \eqref{pppp} without assuming the local Lipschitzianity of $\nabla_p H(\cdot,\cdot)$.
\end{remark}
\section{Necessary and sufficient conditions for optimality}
The first result of this section can be seen as a strengthening of the maximum principle. Roughly speaking, we want to prove the existence of a dual arc that verifies the final condition $-p(T)=q$ for any $q$ in the proximal superdifferential of the final cost. In proving a result of this kind for smooth optimal control problems (see, e.g., \cite[Theorem in 7.3.1.]{MR2041617}), a crucial role is played by the construction of a variation of control and the analysis of the behaviour of the optimal trajectory, depending on the parameters of the variation. 
This approach is not valid in general for differential inclusions. Here, in order to replace the variational equation, we shall introduce a further assumption on the multifunction $F$ and make use of the partial sensitivity relations proved in the previous section.
\begin{theorem}\label{NewOptimality}
Assume (SH), (H1), that $F:\mathbb{R}^n\rightrightarrows \mathbb{R}^n$ is locally strongly convex, and $\phi:\mathbb{R}^n\rightarrow\mathbb{R}$ is locally Lipschitz. Let $\overline{x}:[t_0,T]\rightarrow \mathbb{R}^n$ be an optimal solution for $\mathcal{P}(t_0,x_0)$. Then, for any $q \in \partial^{+,pr}\phi(\overline{x}(T))$, there exists a solution $\overline{p}:[t_0,T]\rightarrow \mathbb{R}^n$ of the differential inclusion
\begin{equation}\label{defP}
\left\{\begin{array}{l}
-\dot{p}(s) \in \partial_x^- H(\overline{x}(s),p(s))\quad \mbox{ a.e. in } s\in \left[ t_0, T \right], \\
-p(T)=q.
\end{array}\right.
\end{equation}
Furthermore, every such solution $\overline{p}(\cdot)$ satisfies the maximum principle
\begin{equation}\label{PRINCmax}
H(\overline{x}(t),\overline{p}(t))= \langle \dot{\overline{x}}(t),\overline{p}(t)\rangle \quad \mbox{ a.e. in } [t_0,T].
\end{equation}
If $q\neq 0$, then \eqref{PRINCmax} holds true everywhere in $[t_0,T]$.
\end{theorem}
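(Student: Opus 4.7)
The plan is to split the statement into (i) existence of a solution $\overline p$ of (\ref{defP}), and (ii) the rigidity claim that \emph{every} such solution automatically verifies the maximum principle (\ref{PRINCmax}). The case $q=0$ will be handled first: by the Gronwall estimate $|\dot{\overline p}(s)|\leq c_K|\overline p(s)|$ recorded in Remark~\ref{RemarkDualArc}, the only solution of (\ref{defP}) is $\overline p\equiv 0$, so (\ref{PRINCmax}) holds with both sides zero. From now on I work under $q\neq 0$; the same Gronwall estimate then yields $\overline p(s)\neq 0$ for every $s\in[t_0,T]$ and every solution of (\ref{defP}).

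For existence I would reduce to a smooth minimization problem. From $q\in\partial^{+,pr}\phi(\overline x(T))$ there exist $c_0,R>0$ with $\phi(y)-\phi(\overline x(T))\leq\langle q,y-\overline x(T)\rangle + c_0|y-\overline x(T)|^2$ on $B(\overline x(T),R)$. Introducing the $C^{1,1}$ cost $\tilde\phi(y):=\langle q,y-\overline x(T)\rangle + c_0|y-\overline x(T)|^2$, the optimality of $\overline x$ for $\phi$ combined with this inequality makes $\overline x$ a local minimizer of $\tilde\phi(y(T))$ over trajectories of (\ref{May1})--(\ref{May2}) ending in $B(\overline x(T),R)$. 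Since $\tilde\phi$ is smooth with $\nabla\tilde\phi(\overline x(T))=q$, applying Theorem~\ref{TheoDualArc2} to this localized Mayer problem produces an arc $\overline p$ satisfying both halves of (\ref{HI2}), the transversality $-\overline p(T)=q$, and hence (\ref{PRINCmax}); in particular, this $\overline p$ solves (\ref{defP}).

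For the rigidity claim, let $\overline p$ be an \emph{arbitrary} solution of (\ref{defP}) with $q\neq 0$. For $t\in[t_0,T)$, I would introduce the unique solution $\tilde x_t:[t,T]\to\mathbb{R}^n$ (provided by Lemma~\ref{LemmmaLip}) of $\dot{\tilde x}_t(s)=\nabla_p H(\tilde x_t(s),\overline p(s))$ with $\tilde x_t(t)=\overline x(t)$, which is a trajectory of $F$ starting from $\overline x(t)$. Optimality and the dynamic programming principle give $\phi(\tilde x_t(T))\geq V(t,\overline x(t))=\phi(\overline x(T))$. For $T-t$ small enough, $\tilde x_t(T)\in B(\overline x(T),R)$, so the proximal inequality applies. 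Rewriting $\langle -\overline p(T),\tilde x_t(T)-\overline x(T)\rangle$ as $\int_t^T\tfrac{d}{ds}\langle -\overline p,\tilde x_t-\overline x\rangle\,ds$ and combining the semiconvexity bound for $-\dot{\overline p}\in\partial_x^- H(\overline x,\overline p)$ with the identity $\langle -\overline p,\nabla_p H(\tilde x_t,\overline p)\rangle=-H(\tilde x_t,\overline p)$ from (\ref{derive})---exactly as in Lemma~\ref{LemmaTecnico}, except that the second Hamiltonian inclusion is not available---one obtains
\[
\int_t^T\delta(s)\,ds \;\leq\; c_0|\tilde x_t(T)-\overline x(T)|^2 + \tilde c\int_t^T|\overline p(s)|\,|\tilde x_t(s)-\overline x(s)|^2\,ds,
\]
with $\delta(s):=H(\overline x(s),\overline p(s))-\langle\overline p(s),\dot{\overline x}(s)\rangle\geq 0$ measuring the defect in (\ref{PRINCmax}).

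To close the argument, the local strong convexity (H2) furnishes $\delta(s)\geq c'|\overline p(s)|E(s)$ with $E(s):=|\dot{\overline x}(s)-\nabla_p H(\overline x(s),\overline p(s))|^2$, while (H1)(ii) and Gronwall's lemma yield $|\tilde x_t(s)-\overline x(s)|^2\leq C(T-t)\int_t^T E$ with $C(\tau)\to 0$ as $\tau\to 0$. Choosing $\tau$ small enough, the preceding estimate forces $\int_{T-\tau}^T E=0$ and hence (\ref{PRINCmax}) a.e.\ on $[T-\tau,T]$. A backward bootstrap then propagates the equality to $[t_0,T]$: once (\ref{PRINCmax}) is known on $[t,T]$, the difference $\dot{\tilde x}_{t'}-\dot{\overline x}$ on that subinterval reduces to $\nabla_p H(\tilde x_{t'},\overline p)-\nabla_p H(\overline x,\overline p)$ and is controlled linearly by $|\tilde x_{t'}-\overline x|$ via (H1)(ii), so the previous estimate applied on a slightly longer interval $[t',T]$ isolates the contribution of $E$ on $[t',t]$ alone. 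The hard part will be managing the Gronwall constants in this iteration so that finitely many steps suffice to reach $t_0$. Finally, for $q\neq 0$, Remark~\ref{RemarkContinuity} guarantees $\overline x\in C^1([t_0,T])$, upgrading (\ref{PRINCmax}) from almost-everywhere to everywhere.
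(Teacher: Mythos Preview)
Your proposal is correct, and the heart of it—comparing $\overline x$ with the trajectory $\tilde x_t$ driven by $\nabla_p H(\cdot,\overline p)$, then combining the proximal inequality at the endpoint with the semiconvexity bound for $-\dot{\overline p}\in\partial_x^- H$ and the strong-convexity lower bound (H2) to force $\int E=0$ on a short interval—is exactly the paper's strategy.

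Two points of deviation are worth flagging. For \emph{existence}, the paper does not invoke any maximum principle: it simply verifies that $G(s,p)=\partial_x^- H(\overline x(s),p)$ has nonempty compact convex values, linear growth $|v|\le c_K|p|$, and closed graph (hence is upper semicontinuous), so the terminal-value problem (\ref{defP}) has a solution by standard differential-inclusion theory. This is more elementary than your route through Theorem~\ref{TheoDualArc2} applied to the smoothed cost $\tilde\phi$, which needs a local-minimizer form of the maximum principle (true, but not literally what Theorem~\ref{TheoDualArc2} states). For the \emph{bootstrap}, the paper restarts at $T-\tau$ by invoking Theorem~\ref{Lemma_super_nonprox}: once (\ref{PRINCmax}) is known on $[T-\tau,T]$, the pair $(\overline x,\overline p)$ satisfies both inclusions of (\ref{HI2}) there, so Theorem~\ref{Lemma_super_nonprox} produces a proximal inequality for $V(T-\tau,\cdot)$ at $\overline x(T-\tau)$ with constants independent of $\tau$, and the first step is repeated verbatim with $V(T-\tau,\cdot)$ in place of $\phi$. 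Your alternative—keeping the endpoint at $T$ and noting that on the already-settled subinterval the Gronwall estimate for $\tilde x_{t'}-\overline x$ loses its $E$-contribution—also works and is more self-contained; the uniform step size you worry about follows because the coefficient to beat is of the form $\mathrm{const}\cdot e^{2K(T-t_0)}(t-t')$, controlled by $t-t'$ alone.
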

\begin{proof}
Let $\overline{x}(\cdot)$ be an optimal solution for $\mathcal{P}(t_0,x_0)$. 
Define the multifunction $G:[t_0,T]\times \mathbb{R}^n \rightrightarrows \mathbb{R}^n$ by $G(s,p)=\partial_x^- H(\overline{x}(s),p)$. Observe that:
\begin{itemize}
\item for each $(t,p)\in [t_0,T]\times \mathbb{R}^n$, $G(t,p)$ is nonempty compact and convex set;
\item by a known property of the generalized gradient, there exists a constant $k>0$ such that, $\forall (s,p)\in [t_0,T]\times \mathbb{R}^n$ and $\forall v \in G(s,p)$, it holds that $| v | \leq  k | p |$; 
\item $G$ is upper semicontinuous. 
\end{itemize}
In order to verify the last property, let us prove that $G$ has a closed graph in $[t_0,T] \times \mathbb{R}^n\times \mathbb{R}^n$. The conclusion follows because a multifunction taking values in a compact set and having a closed graph is upper semicontinuous (see e.g. Corollary 1 p. 41 in \cite{MR755330}). The graph of $G$ is
\[ 
\textit{Graph}(G) = \lbrace \left(  (t,p),q \right), (t,p)\in [t_0,T]\times \mathbb{R}^n : q \in \partial_x H(\overline{x}(t),p) \rbrace.
\]
Let $\left((t_i,p_i),q_i \right)$ be a sequence in $Graph(G)$ which converges to some $((t,p),q)$. Thus, by $q_i \in \partial_x^- H(\overline{x}(t_i),p_i)$, there exists an open set $A$ containing $\overline{x}([t_0,T])$ and a constant $c=c(A)$ so that
\begin{equation}\label{ClosGra}
H(y,p_i) - H(\overline{x}(t_i),p_i) - \langle q_i, y - \overline{x}(t_i) \rangle  \geq - c |p_i| |y - \overline{x}(t_i)|^2 
\end{equation}
for any point $y \in A$ and $i$ large enough. Passing to the limit in (\ref{ClosGra}), we see that $q \in \partial_x^- H(\overline{x}(t),p)$. This proves that the graph of $G$ is closed. \\

We deduce from the above three properties the existence of at least one solution $\overline{p}(\cdot)$ of (\ref{defP}) on $[t_0,T]$.\\
Now let us investigate the equality \eqref{PRINCmax}. Consider, first, the case where $q=0$. Then, thanks to Remark \ref{RemarkDualArc}, we conclude that the solution of (\ref{defP}) vanishes on $[t_0,T]$ and so the equality \eqref{PRINCmax} is obvious. Let now $\overline{p}(\cdot)$ be any solution of (\ref{defP}) for some $q \in \partial^{+,pr}\phi(\overline{x}(T))\smallsetminus\lbrace 0\rbrace$. In this case, since Remark \ref{RemarkDualArc} ensures that $\overline{p}(\cdot)$ never vanishes on $[t_0,T]$, recalling Remark~\ref{RemarkContinuity}, we conclude that our claim \eqref{PRINCmax} is equivalent to the identity
\begin{equation} 
\dot{\overline{x}}(t)=\nabla_p H (\overline{x}(t),\overline{p}(t)) \mbox{ for all } t \in [t_0,T].
\end{equation}
Now let $\tau$ be such that $0<\tau<T-t_0$ and define an admissible trajectory $x:[t_0,T]\rightarrow \mathbb{R}^n$ in the following way:
\begin{itemize}
\item on the interval $[t_0,T-\tau)$, $x(\cdot)$ coincides with the optimal trajectory $\overline{x}(\cdot)$, 
\item on the interval $[T-\tau,T]$, $x(\cdot)$ is the solution of the Cauchy problem:
\begin{equation}\label{defVariaz}
\left\{\begin{array}{l} 
\dot{x}(t)= \nabla_p H(x(t),\overline{p}(t)) \ \mbox{ for all } t \in [T-\tau,T], \\
x(T-\tau)=\overline{x}(T-\tau).
\end{array}\right.
\end{equation}
\end{itemize}
We are going to give a first estimate of $\|\overline{x}-x\|_{\infty}$. By \eqref{defVariaz} and the local Lipschitzianity of the map $\nabla_p H(\cdot,p)$, there exist $0<\tau_1<T-t_0$ and $c \geq 0$ such that, for every $\tau<\tau_1$ and $t\in [T-\tau,T]$,
\begin{equation}
\begin{split}
&|\overline{x}(t)-x(t)|\leq \int_{T-\tau}^t |\dot{\overline{x}}(s)-\nabla_p H(x(s),\overline{p}(s)) |\ d s \\
&\leq \int_{T-\tau}^t \left(|\nabla_p H(\overline{x}(s),\overline{p}(s))-\nabla_p H(x(s),\overline{p}(s)) | +|\dot{\overline{x}}(s) - \nabla_p H(\overline{x}(s),\overline{p}(s))| \right) ds  \\
&\leq c \int_{T-\tau}^t |\overline{x}(s)-x(s)| ds + \int_{T-\tau}^t |\dot{\overline{x}}(s) - \nabla_p H(\overline{x}(s),\overline{p}(s))| ds.
\end{split}
\end{equation}
By the Gronwall inequality, the above estimate yields that, for every $t\in [T-\tau,T]$, 
\begin{equation}\label{stimaGron}
|\overline{x}(t)-x(t)| \leq e^{c \tau}  \int_{T-\tau}^T |\dot{\overline{x}}(s) - \nabla_p H(\overline{x}(s),\overline{p}(s))| ds.
\end{equation}
The next point is to find a good estimate for the integral on the right side of (\ref{stimaGron}). Since $-\overline{p}(T)\in \partial^{+,pr}\phi(\overline{x}(T))$, there exist constant $c_1, r$ such that, if $x(T)\in B(\overline{x}(T),r)$, it holds that
\begin{equation}\label{proxStim}
\phi(x(T))-\phi(\overline{x}(T))+\langle \overline{p}(T),x(T)-\overline{x}(T) \rangle \leq c_1 |x(T)-\overline{x}(T)|^2.
\end{equation}
From (\ref{stimaGron}) we deduce that if $\tau<\min \lbrace \tau_2,\tau_1 \rbrace$, where
$$
\tau_2 := \frac{1}{c} ln \left( \frac{r}{ \parallel \dot{\overline{x}}\parallel_{\mathcal{L}^1([t_0,T])}+ \parallel  \nabla_p H (\overline{x},\overline{p}) \parallel_{\mathcal{L}^1([t_0,T])}}\right),
$$
then $x(T)\in B(\overline{x}(T),r)$ and so \eqref{proxStim} is true. Hence, by the optimality of $\overline{x}(\cdot)$ and (\ref{proxStim}), it follows that
\begin{equation}\label{ProxStimDopo}
\langle \overline{p}(T),x(T)-\overline{x}(T) \rangle \leq c_1 | x(T)-\overline{x}(T)|^2.
\end{equation}
Furthermore, since $x(T-\tau)=\overline{x}(T-\tau)$, we have that
\begin{equation}\label{11}
\begin{split}
\langle \overline{p}(T),x(T)-\overline{x}(T) \rangle &=  \int_{T-\tau}^T  \langle \dot{\overline{p}}(s), x(s)-\overline{x}(s) \rangle\ ds +\int_{T-\tau }^T \langle \overline{p}(s),\dot{x}(s)-\dot{\overline{x}}(s) \rangle \ d s \\
&= (I) + (II)
\end{split}
\end{equation}
We can estimate the first term using the assumption (H1)(i):
\begin{equation}\label{22}
(I)\geq \int_{T-\tau}^T \left( H(\overline{x}(s),\overline{p}(s))-H(x(s),\overline{p}(s))- c_2 |x(s)-\overline{x}(s)|^2\right) ds,
\end{equation}
where $c_2$ is a suitable constant. The second term can be estimated using  Proposition \ref{equiv}, 5. and the identity \eqref{derive}, to obtain 
\begin{equation}\label{33}
\begin{split}
&(II)=\int_{T-\tau}^{T}\left( H(x(s),\overline{p}(s))-  H(\overline{x}(s),\overline{p}(s))+ \langle \overline{p}(s), \nabla_p H(\overline{x}(s),\overline{p}(s))- \dot{\overline{x}}(s) \rangle \right) \\
&\geq \int_{T-\tau}^{T} \left( H(x(s),\overline{p}(s))-  H(\overline{x}(s),\overline{p}(s)) +c_3 |\overline{p}(s)||\nabla_p H(\overline{x}(s),\overline{p}(s))- \dot{\overline{x}}(s)|^{2} \right) ds,
\end{split}
\end{equation}
for some constant $c_3$. The four previous estimates together imply that there exist constants $c_{4},c_5\geq 0$ such that
\begin{equation}\label{tecni}
\begin{split}
\int_{T-\tau}^T |\nabla_p H(\overline{x}(s),\overline{p}(s))-\dot{\overline{x}}(s)|^{2}\ ds &\leq c_4 \int_{T-\tau}^T |\overline{x}(s)-x(s)|^2\ ds + c_5 |\overline{x}(T)-x(T)|^2\\
&\leq(c_4 \tau +c_5)\parallel x - \overline{x} \parallel_{\infty}^2.
\end{split}
\end{equation}
Now we can go back to (\ref{stimaGron}) and estimate the integral on the right side using (\ref{tecni}) and the H\"older inequality. We obtain that
\begin{equation}
\parallel \overline{x}-x \parallel_{\infty} \leq e^{c \tau}\tau ^{\frac{1}{2}} \left( c_4 \tau + c_5 \right)^{\frac{1}{2}} \parallel \overline{x}-x \parallel_{\infty}.
\end{equation}
Let $\tau_3$ be such that $e^{2 c \tau_3}\tau_3 (c_4 \tau_3 +c_5)=1$. Then, choosing $\tau < \min_{i=1,2,3}\lbrace \tau_i \rbrace$, it follows that $\parallel \overline{x}-x\parallel_{\infty}=0$ and so, by (\ref{tecni}), we finally conclude that
\begin{equation}\label{quasiTesi}
\dot{\overline{x}}(t)=\nabla_p H (\overline{x}(t),\overline{p}(t)) \mbox{  a.e. in } t \in [T-\tau,T].
\end{equation}
Since the arc $\overline{p}(\cdot)$ never vanishes, the pair $(\overline{x}(\cdot),\overline{p}(\cdot))$ stays in the set where $H$ is differentiable with respect to $p$. So, \eqref{quasiTesi} is true for all $t\in [T-\tau,T]$. In order to show that the equality in \eqref{quasiTesi} holds on the whole interval $[t_0,T]$, one can iterate the above argument, making use of Theorem \ref{Lemma_super_nonprox}, and reach the conclusion in a finite number of steps. Let us only sketch the second step. Recall that the trajectory $\overline{x}|_{[T-\tau,T]}:[T-\tau, T]\rightarrow \mathbb{R}^n$ is optimal for the Mayer problem  $\mathcal{P}( T-\tau, \overline{x}(T-\tau))$ and we have just shown that $\overline{x}(\cdot)$ is the solution of the Cauchy problem
\begin{equation}
\left\{\begin{array}{l} 
\dot{x}(t)=\nabla_p H (x(t),\overline{p}(t)),\quad t \in [T-\tau,T], \\
x(T-\tau)= \overline{x}(T-\tau).
\end{array}\right.
\end{equation}
Hence, by Theorem \ref{Lemma_super_nonprox}, there exist constants $k, r_1 >0$ such that, for all $h\in B(0,r_1)$, it holds that
\begin{equation}\label{proxStim2}
V(T-\tau, \overline{x}(T-\tau)+h)- V(T-\tau, \overline{x}(T-\tau))+\langle \overline{p}(T-\tau),h \rangle \leq  k |h|^2.
\end{equation}
We define a trajectory $x:[t_0,T-\beta]\rightarrow \mathbb{R}^n$, with $\tau<\beta<T-t_0$, which on the interval $[t_0,T-\beta)$ coincides with $\overline{x}(\cdot)$, and on the interval $[T-\beta,T-\tau]$ is the solution of the problem
\begin{equation}
\left\{\begin{array}{l} 
\dot{x}(t)= \nabla_p H(x(t),\overline{p}(t)) ,\quad t \in [T-\beta,T-\tau], \\
x(T-\beta)=\overline{x}(T-\beta).
\end{array}\right.
\end{equation}
Choosing $\beta$ such that $x(T-\tau)\in B(\overline{x}(T-\tau),r_1)$, estimate (\ref{proxStim2}) yields
\begin{equation}\label{proxStim4}
\begin{split}
V(T-&\tau, x(T-\tau))- V(T-\tau, \overline{x}(T-\tau))+\langle \overline{p}(T-\tau), x(T-\tau)-\overline{x}(T-\tau) \rangle \\
&\leq  k |x(T-\tau)-\overline{x}(T-\tau)|^2.
\end{split}
\end{equation}
Thus, using the dynamic programming principle, we obtain that
\begin{equation}\label{proxStim3}
\begin{split}
V(T-\tau,&x(T-\tau)) - V( T-\tau,\overline{x}(T-\tau))\\
&\geq V(T-\beta,x(T-\beta))-V(T-\beta,\overline{x}(T-\beta))=0.
\end{split}
\end{equation}
Then, from (\ref{proxStim4}) and (\ref{proxStim3}), we deduce that
\begin{equation}
\langle \overline{p}(T-\tau),x(T-\tau)-\overline{x}(T-\tau) \rangle \leq k |x(T-\tau)-\overline{x}(T-\tau)|^2,
\end{equation}
which replaces (\ref{ProxStimDopo}). The estimates right after (\ref{ProxStimDopo}) can be easily adapted to imply, finally, that the equality in (\ref{quasiTesi}) holds on the interval $[T-\beta,T]$, for some suitable $\tau < \beta\leq T-t_0$. The conclusion on $[t_0,T]$ can be reached in a finite number of steps, since the constant $r$ such that (\ref{iclu_super}) holds true for all $h \in  B(0,r)$ are independent from $t\in [t_0,T]$.
\end{proof}
\begin{remark}
The above theorem and Theorem \ref{Lemma_super_nonprox} give together that for any $q\in \partial^{+,pr}\phi(\overline{x}(T))$ and for any solution $\overline{p}$ of \eqref{defP} both the maximum principle and the full sensitivity relation \eqref{NuovaInclu} hold true. This is a less restrictive conclusion that the one of Theorem \ref{TheoDualArc2}, which only affirms that for some $q\in\partial\phi(\overline{x}(T))$ and some solution $\overline{p}(\cdot)$ the maximum principle \eqref{MAXIMUMprincipleEQUATION} holds true. For this reason, using the proximal superdifferential of $\phi$ instead of the generalized gradient seems to be more appropriate whenever (H1) is satisfied and $F$ is locally strongly convex. 
\end{remark}
Now we are ready to give a set of necessary and sufficient conditions for optimality.  
\begin{theorem}
Assume (SH) and (H1). Let $\phi: \mathbb{R}^n \rightarrow \mathbb{R}$ be locally semiconcave and $F:\mathbb{R}^n\rightrightarrows \mathbb{R}^n$ locally strongly convex. A solution $\overline{x}:[t_0,T]\rightarrow \mathbb{R}^n$ of the system \eqref{May1}-\eqref{May2} is optimal for $\mathcal{P}(t_0,x_0)$ if and only if, for every $q \in \partial^{+} \phi(\overline{x}(T))$, any solution $\overline{p}:[t_0,T]\rightarrow \mathbb{R}^n$ of the differential inclusion
\begin{equation}\label{DualArc4}
-\dot{p}(t)\in \partial_x^- H(\overline{x}(t),p(t))\quad   \mbox{ a.e. in } \left[ t_0, T \right]
\end{equation}
with the transversality condition 
\begin{equation}\label{dualarcStar4}
-\overline{p}(T)=q, 
\end{equation}
satisfies the full sensitivity relation
\begin{equation}\label{NuovaInclu4}
\left( H(\overline{x}(t),\overline{p}(t)),-\overline{p}(t)\right)\in  \partial^+ V(t,\overline{x}(t)) \quad \textit{ for all } t\in(t_0,T),
\end{equation}
and the maximum principle
\begin{equation}\label{MaxPrinc4}
H(\overline{x}(t),\overline{p}(t))= \langle \overline{p}(t), \dot{\overline{x}}(t) \rangle\quad \mbox{ a.e. in } [t_0,T].
\end{equation}
\end{theorem}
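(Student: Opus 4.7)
The statement is an equivalence, and the strategy is to assemble what has already been proved: the partial/full sensitivity relations (Theorem \ref{Lemma_super_nonprox} and Theorem \ref{FullSensivTheorem}), the strengthened maximum principle (Theorem \ref{NewOptimality}), the sufficient optimality condition (Theorem \ref{Sufficient}), and the two properties of semiconcave functions listed in Proposition \ref{PropSemiconc}, namely $\partial^{+}\phi=\partial^{+,pr}\phi$ and the nonemptiness of $\partial^{+}\phi$ at every point.

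For the \emph{only if} direction, assume $\overline{x}(\cdot)$ is optimal and fix $q\in\partial^{+}\phi(\overline{x}(T))$. Local semiconcavity of $\phi$ implies $\partial^{+}\phi(\overline{x}(T))=\partial^{+,pr}\phi(\overline{x}(T))$, so $q\in \partial^{+,pr}\phi(\overline{x}(T))$. Theorem \ref{NewOptimality} then provides at least one solution $\overline{p}(\cdot)$ of the terminal value problem \eqref{DualArc4}-\eqref{dualarcStar4}, and moreover guarantees that every such solution fulfils the maximum principle \eqref{MaxPrinc4}. Since additionally $q=-\overline{p}(T)\in \partial^{+}\phi(\overline{x}(T))$, Theorem \ref{FullSensivTheorem} applies and gives the full sensitivity relation \eqref{NuovaInclu4} for the very same $\overline{p}(\cdot)$ (note that to apply Theorem \ref{FullSensivTheorem} one needs the pair $(\overline{x},\overline{p})$ to solve the full Hamiltonian system \eqref{DualArc}, but the second inclusion there is automatic once the maximum principle holds, because it encodes exactly the equality $\dot{\overline{x}}(t)\in\partial_p H(\overline{x}(t),\overline{p}(t))$).

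For the \emph{if} direction, I would argue as follows. Semiconcavity of $\phi$ ensures that $\partial^{+}\phi(\overline{x}(T))$ is nonempty, so there exists some $q\in\partial^{+}\phi(\overline{x}(T))$. By the same existence argument used in the first part of the proof of Theorem \ref{NewOptimality} (which relies only on the upper semicontinuity, convex compact values and linear growth of the multifunction $(s,p)\mapsto \partial_x^{-}H(\overline{x}(s),p)$ and not on optimality of $\overline{x}$), the inclusion \eqref{DualArc4} with terminal condition $-\overline{p}(T)=q$ admits at least one solution $\overline{p}:[t_0,T]\to\mathbb{R}^n$. By hypothesis, this $\overline{p}(\cdot)$ satisfies both the maximum principle \eqref{MaxPrinc4} and the full sensitivity relation \eqref{NuovaInclu4}. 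Theorem \ref{Sufficient} (whose hypotheses \eqref{hhh} are precisely \eqref{MaxPrinc4} and \eqref{NuovaInclu4}) therefore implies that $\overline{x}(\cdot)$ is optimal for $\mathcal{P}(t_0,x_0)$.

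The proof is essentially a bookkeeping exercise, so no real obstacle is expected; the only point that requires a small amount of care is the appeal to Proposition \ref{PropSemiconc} to convert between $\partial^{+}\phi$ and $\partial^{+,pr}\phi$ (so that Theorem \ref{NewOptimality}, which is formulated with the proximal superdifferential, may be applied to any $q\in\partial^{+}\phi(\overline{x}(T))$), together with the observation that the pair $(\overline{x},\overline{p})$ solves the coupled Hamiltonian inclusion whenever the first adjoint inclusion and the maximum principle both hold, which puts us in a position to invoke Theorem \ref{FullSensivTheorem}.
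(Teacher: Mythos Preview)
Your proposal is correct and follows essentially the same route as the paper's proof: sufficiency via Theorem~\ref{Sufficient}, necessity of the maximum principle via Theorem~\ref{NewOptimality} (using $\partial^{+}\phi=\partial^{+,pr}\phi$ for semiconcave $\phi$), and the full sensitivity relation via Theorem~\ref{FullSensivTheorem}. You actually supply more detail than the paper does---in particular, your observation that the second Hamiltonian inclusion $\dot{\overline{x}}\in\partial_p H(\overline{x},\overline{p})$ follows from the maximum principle (so that Theorem~\ref{FullSensivTheorem} applies), and your remark that the existence part of Theorem~\ref{NewOptimality} does not use optimality (so that the ``if'' hypothesis is nonvacuous), are both correct and worth making explicit.
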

\begin{proof}
The sufficiency follows from Theorem \ref{Sufficient}. The fact that the existence of an arc $p(\cdot)$ satisfying \eqref{dualarcStar4} and \eqref{MaxPrinc4} is a necessary condition for optimality consists in Theorem \ref{NewOptimality}. We need just to recall that if $\phi$ is a locally semiconcave function, then $\partial^{+,pr}\phi(\overline{x}(T))=\partial^{+}\phi(\overline{x}(T))$ and this set is nonempty. Finally, the full sensitivity relation \eqref{NuovaInclu4} comes from Theorem \ref{FullSensivTheorem}.
\end{proof}
\section{Relations between reachable gradients of the value function and optimal trajectories}
In the calculus of variations, the existence of a one-to-one correspondence between the set of minimizers starting from a point $(t,x)$ and the reachable gradient of the value function at $(t,x)$ is a well-known fact. This allows, among other things, to identify the set of singular points of the value function as the set of starting points for more than one minimizer. The aim of this last section is to investigate the relations that occur in our context. Here, a difficulty consists in the singularity of the Hamiltonian at $p=0$, that forces us to study separately the case where $0 \in \partial^{*}V(t,x)$. When the Hamiltonian and the endpoint cost are in the class $C^{1,1}_{loc}( \mathbb{R}^n\times (\mathbb{R}^n\smallsetminus \lbrace 0 \rbrace))$ and $C^1(\mathbb{R}^n)$, respectively, there is an injective map from $\partial^{*}V(t,x)\smallsetminus \lbrace 0 \rbrace$ to the set of all optimal trajectories starting from $(t,x)$. See for instance Theorem 7.3.10 in \cite{MR2041617}, where the authors give the proof for optimal control problems with smooth data. However, we assume here neither the existence of a smooth parameterization, nor such a regularity of the Hamiltonian. It is precisely the lack of regularity of $H$ that represents the main difficulty, since it does not guarantee the uniqueness of solutions of the system \eqref{NewHamilt} below. We shall prove that in our case, under suitable assumptions, there exists an injective set-valued map from $\partial^{*}V(t,x)\smallsetminus \lbrace 0 \rbrace$ into the set of optimal trajectories starting from $(t,x)$.
\begin{lemma}\label{StranoBefore}
Let (SH), (H1) hold and $\phi\in C^1(\mathbb{R}^n)$. Given a point $(t,x)\in [0,T]\times \mathbb{R}^n$ and a vector $\overline{p}=(\overline{p}_t,\overline{p}_x)\in \partial^{*}V(t,x)\smallsetminus \lbrace 0 \rbrace$, there exists at least one pair $(y(\cdot),p(\cdot))$ that satisfies the system 
\begin{equation}\label{NewHamilt}
\left\lbrace
\begin{array}{rlll}
\dot{y}(s)&= &\nabla_p H(y(s),p(s)) & \mbox{ for all }s\in [t,T],  \\
-\dot{p}(s)&\in &\partial_x^- H(y(s),p(s))&\mbox{ a.e. in }[t,T],
\end{array}\right.
\end{equation}
and initial conditions 
\begin{equation}\label{Condition}
\left\lbrace
\begin{array}{l}
y(t)=x,  \\
p(t)=-\overline{p}_x,
\end{array}\right.
\end{equation}
such that $y(\cdot)$ is optimal for $\mathcal{P}(t,x)$.  
\end{lemma}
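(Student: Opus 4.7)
The strategy is to approximate $\overline{p}$ by genuine gradients of $V$, build candidate pairs $(y_i,p_i)$ at nearby smooth points using Theorem~\ref{TheoDualArc2} together with the full sensitivity relation, and then pass to the limit via Arzelà--Ascoli. By the definition of $\partial^{*}V(t,x)$, pick a sequence $(t_i,x_i)\to (t,x)$ at which $V$ is differentiable with $\nabla V(t_i,x_i)\to \overline{p}$. Let $y_i:[t_i,T]\to\mathbb{R}^n$ be an optimal trajectory for $\mathcal{P}(t_i,x_i)$, which exists by (SH) and the continuity of $\phi$. Apply Theorem~\ref{TheoDualArc2} to obtain a dual arc $p_i$ satisfying the Hamiltonian inclusion \eqref{HI2} together with $-p_i(T)=\nabla\phi(y_i(T))\in \partial^{+}\phi(y_i(T))$. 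Since the hypotheses of Theorem~\ref{FullSensivTheorem} are met, the full sensitivity relation \eqref{NuovaInclu} holds, and differentiability of $V$ at $(t_i,x_i)$ forces the equality
\begin{equation*}
\bigl(H(y_i(t_i),p_i(t_i)),-p_i(t_i)\bigr) = \nabla V(t_i,x_i).
\end{equation*}

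Taking the spatial component gives $p_i(t_i)\to -\overline{p}_x$. The Hamilton--Jacobi equation at $(t_i,x_i)$ reads $\partial_t V(t_i,x_i)=H(x_i,-\partial_x V(t_i,x_i))$, which in the limit becomes $\overline{p}_t=H(x,-\overline{p}_x)$; since $H(x,0)=0$, the assumption $\overline{p}\neq 0$ forces $\overline{p}_x\neq 0$, so $p_i(t_i)\to -\overline{p}_x\neq 0$. For $i$ large, $p_i(t_i)\neq 0$, so by Remark~\ref{RemarkDualArc} each $p_i$ is bounded away from both $0$ and $\infty$ uniformly in $i$ and equi-Lipschitz on $[t_i,T]$; in turn $y_i$ is uniformly bounded and equi-Lipschitz thanks to (SH)(iii). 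After extending $y_i$ and $p_i$ continuously to $[t,T]$ (constantly equal to $x_i$ and $p_i(t_i)$ on $[t,t_i]$ when $t_i>t$, or by restriction when $t_i\le t$), a subsequence converges uniformly on $[t,T]$ to Lipschitz limits $y$ and $p$; the effective convergence occurs on $[t,T]$ since $t_i\to t$.

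It remains to verify that $(y,p)$ satisfies \eqref{NewHamilt}--\eqref{Condition} and that $y$ is optimal. For the state equation, continuity of $(y,p)\mapsto \nabla_p H(y,p)$ on $\{p\neq 0\}$, granted by (H1)(ii) and Remark~\ref{RemarkContinuity}, allows passage to the limit in $\dot{y}_i=\nabla_p H(y_i,p_i)$, yielding $\dot{y}=\nabla_p H(y,p)$ wherever $p\neq 0$; since $p(t)=-\overline{p}_x\neq 0$, the Gronwall argument of Remark~\ref{RemarkDualArc} applied to the limit $p$ forces $p(s)\neq 0$ for all $s\in[t,T]$, so the state equation holds on the whole interval. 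For the adjoint inclusion, the closed-graph property of the Fr\'echet subdifferential map $(y,p)\mapsto \partial_x^{-}H(y,p)$ — exactly the property exploited in the proof of Theorem~\ref{NewOptimality} and stemming from the semiconvexity of $H(\cdot,p)$ in (H1)(i) — combined with standard convergence theorems for differential inclusions with upper semicontinuous, convex-compact-valued right-hand side (see, e.g., \cite{MR755330}), delivers $-\dot{p}\in \partial_x^{-}H(y,p)$ a.e.\ on $[t,T]$. The initial conditions follow immediately from $y_i(t_i)=x_i\to x$ and $p_i(t_i)\to -\overline{p}_x$ together with uniform convergence and $t_i\to t$. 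Optimality of $y$ is obtained from continuity of $V$ and $\phi$: $\phi(y(T))=\lim_i \phi(y_i(T))=\lim_i V(t_i,x_i)=V(t,x)$. The principal technical obstacle is the careful bookkeeping needed to compensate for the mismatch between the domains $[t_i,T]$ and $[t,T]$ and to justify the passage to the limit in the set-valued adjoint inclusion, which is the step where the semiconvexity assumption (H1)(i) is essential.
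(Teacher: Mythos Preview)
Your overall strategy---approximate $(t,x,\overline{p})$ by differentiability points, build optimal pairs $(y_i,p_i)$ via the maximum principle and a sensitivity relation, then pass to the limit by compactness---is exactly the paper's route, and your compactness/closure arguments are correct.

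There is, however, one genuine slip. You invoke Theorem~\ref{FullSensivTheorem} to obtain
\[
\bigl(H(y_i(t_i),p_i(t_i)),-p_i(t_i)\bigr)\in \partial^{+}V(t_i,x_i),
\]
but \eqref{NuovaInclu} is asserted only on the \emph{open} interval $(t_i,T)$, so it does not apply at the initial time $t_i$. The fix is immediate: use instead the partial sensitivity relation of Remark~\ref{RelarkPartialSensRelation}, which does hold for all $t\in[t_i,T]$. At $t=t_i$ it gives $-p_i(t_i)\in\partial_x^{+}V(t_i,x_i)$, and differentiability of $V$ at $(t_i,x_i)$ forces $-p_i(t_i)=\nabla_xV(t_i,x_i)$; this is the only conclusion you actually use.

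With that correction, your argument is in fact slightly leaner than the paper's. The paper first runs a separate Filippov-based variational argument to prove $\nabla\phi(y_k(T))\neq 0$, and only then deduces that the dual arcs are nonvanishing. You bypass this step entirely: once $-p_i(t_i)=\nabla_xV(t_i,x_i)\to\overline{p}_x\neq 0$ is in hand, Remark~\ref{RemarkDualArc} yields $p_i$ nonvanishing for $i$ large, and $\nabla\phi(y_i(T))=-p_i(T)\neq 0$ follows as a consequence rather than a prerequisite. So the two proofs agree in structure, but your ordering of the steps avoids one auxiliary lemma.
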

\begin{proof}
Observe, first, that if $(\overline{p}_t,\overline{p}_x)\in \partial^{*}V(t,x)\smallsetminus \lbrace 0 \rbrace$, then $\overline{p}_x \neq 0$. Indeed, every point $(\overline{p}_t,\overline{p}_x)\in \partial^{*}V(t,x)$ satisfies the equation $-p_t + H(x,-p_x)=0$. Furthermore, $H(x,0)=0$. Consequently, if $(p_t,p_x) \neq 0$, then $p_x \neq 0$.\\
Since $(\overline{p}_t,\overline{p}_x)\in \partial^{*}V(t,x)$, we can find a sequence $\{(t_k,x_k)\}$ such that $V$ is differentiable at ${(t_k,x_k)}$ and
\[
\lim_{k\rightarrow \infty} (t_k,x_k)=(t,x), \quad \lim_{k\rightarrow \infty} \nabla_x V(t_k,x_k)=-\overline{p}_x.
\]
Let $y_k(\cdot)$ be an optimal trajectory for $\mathcal{P}(t_k,x_k)$. Let us prove, first, that $\nabla \phi (y_k(T))\neq 0$ for $k$ large enough. Since $\overline{p}_x \neq 0$, we have that there exists $\overline{k}>0$ such that $\nabla_x V(t_k,x_k)\neq 0$ for $k>\overline{k}$. Now fix $k>\overline{k}$. Then, there exists $\theta \in \mathbb{R}^{n}$ such that $\langle \nabla_x V (t_k,x_k),\theta \rangle>0$. Take a sequence $s_i\rightarrow 0^+$ and consider the problem 
\begin{equation}\label{Ahah}
\left\{\begin{array}{l}
\dot{z_i}(s) \in F(z_i(s)) \ \mbox{ a.e. in } \left[ t_k, T \right], \\
z_i(t_k)= y_k(t_k)+s_i \theta. 
\end{array}\right.
\end{equation}
By Filippov's Theorem (see, e.g., \cite[Theorem 10.4.1]{MR1048347}), there exists a solution $z_i(\cdot)$ of (\ref{Ahah}) such that  $\parallel z_i - y_k \parallel_{\infty}\leq c s_i$, for some $c>0$ independent of $i$. Hence, when $i \rightarrow \infty$,
\begin{equation}\label{Ahah2}
\frac{V(t_k,y_k(t_k)+s_i \theta)-V(t_k,y_k(t_k))}{s_i} \rightarrow \langle \nabla_x V (t_k,x_k),\theta \rangle > 0. 
\end{equation}
Moreover, by the dynamic programming principle, \eqref{Ahah}, and \eqref{Ahah2} we obtain that
\begin{equation}\label{DerNonNull}
\begin{split}
&\limsup_{i \rightarrow \infty} \frac{\phi(z_i(T))-\phi(y_k(T))}{s_i} \geq 
\lim_{i \rightarrow \infty} \frac{V(t_k,z_i(t_k))-V(t_k,y_k(t_k))}{s_i} 
\\
&=\lim_{i \rightarrow \infty} \frac{V(t_k,y_k(t_k)+s_i \theta)-V(t_k,y_k(t_k))}{s_i}=\langle \nabla_x V (t_k,x_k),\theta \rangle > 0. 
\end{split}
\end{equation}
Now we can consider a subsequence $\lbrace z_{i_j} \rbrace$ and a $\gamma\in\mathbb{R}^n$ such that
\[
\frac{z_{i_j}(T)-y_k(T)}{s_{i_j}}\rightarrow \gamma.
\]
Then, by (\ref{DerNonNull}), it follows that 
\[
\langle \nabla \phi (y_k(T)),\gamma \rangle > 0.
\]
We can conclude that $\nabla \phi (y_k(T))\neq 0$. This condition allows to say that there exists a nonvanishing arc $p_k (\cdot)$ such that, for each $k$ large enough, the pair $(y_k(\cdot),p_k(\cdot))$ solves the system \eqref{NewHamilt} and $-p_k (T) = \nabla \phi (y_k(T))$. We note that $-p_k (T)$ is an element in the set $\partial^+ \phi (y_k(T))$ and so the sensitivity relation described in Remark \ref{RelarkPartialSensRelation} holds. This means that
\begin{equation}\label{gradAppro}
-p_k(s)\in \partial^+_x V(s,y_k(s))\quad \forall s \in [t_k,T].
\end{equation}
Moreover, recalling that $V$ is differentiable at ${(t_k,x_k)}$, it holds that $-p_k (t_k)=\nabla_x V(t_k,x_k)$ and so the pair $(y_k(\cdot),p_k (\cdot))$ satisfies the initial conditions $y_k(t_k)=x_k, -p_k (t_k)=\nabla_x V(t_k,x_k)$.\\ 
The last argument consists in proving that the sequence $(y_k(\cdot),p_k (\cdot))$, after possibly passing to a subsequence, converges to a pair $(y(\cdot),p(\cdot))$ that verifies our claims. It is easy to prove that the sequences of functions $\lbrace p_k \rbrace_k$ and $\lbrace y_k\rbrace_k$ are uniformly bounded and uniformly Lipschitz continuous in $[t,T]$, using Gronwall's inequality together with estimates (\ref{Gronwall}) and (SH)(iii), respectively. Hence, after possibly passing to subsequences, we may assume that the sequence $(y_k(\cdot),p_k (\cdot))$ converges uniformly in $[t,T]$ to some pair of Lipschitz functions $(y(\cdot),p(\cdot))$. Moreover, $\dot{p}_k (\cdot)$ converges weakly to $\dot{p}(\cdot)$ in $L^1([t,T];\mathbb{R}^n)$. Furthermore, we can say that 
$$((y_k(s),p_k (s)),-\dot{p}_k(s))\in Graph(M)\quad \mbox{ a.e. in } [t_k,T],$$ 
where $M$ is given by $M(x,p)=\partial_x^- H(x,p)$. The multifunction $M$ is upper semicontinuous on its domain; this can be easily derived as done for $G$ in the proof of Theorem \ref{NewOptimality}. Hence, from Theorem 7.2.2. in \cite{MR1048347} it follows that $-\dot{p}(s)\in \partial_x^- H(y(s),p(s))$ for a.e. $s \in [t,T]$. By the continuous differentiability of $\phi$, we get that $-p(T)=\nabla \phi(y(T))\neq 0$ and so, recalling Remark \ref{RemarkDualArc}, the arc $p(\cdot)$ never vanishes. Hence, recalling also that the map $(x,p)\mapsto \nabla_p H(x,p)$ is continuous for $p\neq 0$, we easily get that $\dot{y}(s)=\nabla_p H(y(s),p(s))$ for all $s\in[t,T]$. In conclusion, $(y(\cdot),p(\cdot))$ is a solution of the adjoint system \eqref{NewHamilt} with initial conditions $y(t)=x,\ p(t)=-\overline{p}_x \neq 0$. This implies of course that $\dot{y}(s)\in F(y(s))$ for all $s\in [t,T]$. Moreover, since $V$ is continuous and $y_k(\cdot)$ are optimal, we have
\[
\phi (y(T))= \lim_{k\rightarrow \infty} \phi (y_k(T))= \lim_{k\rightarrow \infty} V(t,y_k(t)) = V(t,y(t)),
\]
which means that $y(\cdot)$ is optimal for $\mathcal{P}(t,x)$.
\end{proof}
For any $\overline{p}=(\overline{p}_t,\overline{p}_x)\in \partial^{*}V(t,x)\smallsetminus \lbrace 0 \rbrace$, we denote by $\mathcal{R}(\overline{p})$ the set of all trajectories $y(\cdot)$ that are solution of \eqref{NewHamilt}-\eqref{Condition}, and optimal for $\mathcal{P}(t,x)$. The above theorem guarantees that the set-valued map $\mathcal{R}$ that associates with any $p \in \partial^{*}V(t,x)\smallsetminus \lbrace 0 \rbrace$ the set $\mathcal{R}(p)$ has nonempty values. Now let us prove that $\mathcal{R}$ is strongly injective. We will use the ``difference set'':
\[
\partial_x^- H(x,p)-\partial_x^- H(x,p):=\lbrace a - b :\ a, b \in \partial_x^- H(x,p)\rbrace.
\]
\begin{theorem}\label{Strano}
Under all the hypotheses of Lemma \ref{StranoBefore}, if we assume in addition that
\begin{description}
\item[(H3)]  
for each $x\in \mathbb{R}^n$, $F(x)$ is not a singleton and, if $n > 1$, it has a $C^1$ boundary, 
\item[(H4)] 
$\mathbb{R}^+ p \cap \left( \partial_x^- H(x,p)- \partial_x^- H(x,p)\right) = \emptyset \quad \forall p\neq 0,$
\end{description}
then for any $\overline{p}_{1},\overline{p}_{2} \in \partial^{*}V(t,x)\smallsetminus \lbrace 0 \rbrace$ with $\overline{p}_{1}\neq \overline{p}_{2}$, we have that $\mathcal{R}(\overline{p}_1)\cap \mathcal{R}(\overline{p}_2)=\emptyset$. Equivalently, the set-valued map $\mathcal{R}$ is strongly injective.
\end{theorem}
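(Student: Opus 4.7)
The plan is a proof by contradiction. Assume there exists $y(\cdot)\in\mathcal{R}(\overline{p}_1)\cap\mathcal{R}(\overline{p}_2)$, and pick dual arcs $p_1,p_2:[t,T]\to\mathbb{R}^n$ with $(y,p_i)$ solving \eqref{NewHamilt} and $p_i(t)=-\overline{p}_{i,x}$. As in the opening observation of the proof of Lemma~\ref{StranoBefore}, the Hamilton--Jacobi identity $-\overline{p}_{i,t}+H(x,-\overline{p}_{i,x})=0$ together with $\overline{p}_i\neq 0$ forces $\overline{p}_{i,x}\neq 0$; by Remark~\ref{RemarkDualArc} neither $p_1$ nor $p_2$ vanishes on $[t,T]$.

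The first reduction uses (H3). Since $y$ is common to both pairs, the identity $\dot y(s)=\nabla_pH(y(s),p_1(s))=\nabla_pH(y(s),p_2(s))$ means that $v(s):=\dot y(s)$ is the unique maximizer of both $\langle\cdot,p_1(s)\rangle$ and $\langle\cdot,p_2(s)\rangle$ over $F(y(s))$, hence $p_1(s),p_2(s)\in N_{F(y(s))}(v(s))$. Under (H3) this normal cone is a half line (for $n>1$ the $C^1$ boundary gives a unique outward normal direction at $v(s)$; for $n=1$ the nondegenerate interval gives the same conclusion, the coincidence of the argmax forcing the signs to match). Therefore there exists $\lambda(s)>0$ with
\[
p_2(s)=\lambda(s)\,p_1(s),\qquad s\in[t,T],
\]
and since $p_1$ does not vanish, $\lambda(\cdot)$ is absolutely continuous.

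The second reduction uses (H4) to show $\lambda$ is constant. Positive $1$-homogeneity of $H$ in $p$ gives $\partial_x^-H(y(s),\lambda(s)p_1(s))=\lambda(s)\,\partial_x^-H(y(s),p_1(s))$. Choosing measurable selections $\xi_i(s)\in\partial_x^-H(y(s),p_1(s))$ with $-\dot p_1(s)=\xi_1(s)$ and $-\dot p_2(s)=\lambda(s)\xi_2(s)$, differentiation of the identity $p_2=\lambda p_1$ and substitution yield
\[
\frac{\dot\lambda(s)}{\lambda(s)}\,p_1(s)\;=\;\xi_1(s)-\xi_2(s)\;\in\;\partial_x^-H(y(s),p_1(s))-\partial_x^-H(y(s),p_1(s)).
\]
Since the difference set on the right is symmetric about the origin, (H4) rules out any nonzero scalar multiple of $p_1(s)$ belonging to it. Hence $\dot\lambda\equiv 0$, $\lambda$ is a positive constant, and evaluating at $s=t$ gives $\overline{p}_{2,x}=\lambda\overline{p}_{1,x}$; combining with the Hamilton--Jacobi identity and $1$-homogeneity then forces $\overline{p}_{2,t}=\lambda\overline{p}_{1,t}$, so altogether $\overline{p}_2=\lambda\overline{p}_1$.

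The main obstacle is to rule out $\lambda\neq 1$. For this I would exploit that each $\overline{p}_i$ lies in $\partial^*V(t,x)$: it is the limit of gradients $\nabla V(t_k^i,x_k^i)$ at differentiability points of $V$. Repeating the limit construction of Lemma~\ref{StranoBefore}, the associated adjoint arcs $p_k^i$ (for which $-p_k^i(T)=\nabla\phi(y_k^i(T))$ by $\phi\in C^1$ and the non-vanishing argument from that lemma) pass to a dual arc $\tilde p_i$ on $[t,T]$ satisfying $\tilde p_i(t)=-\overline{p}_{i,x}$ and $-\tilde p_i(T)=\nabla\phi(y(T))$, once the approximating optimal trajectories $y_k^i$ are arranged to converge to our shared $y$. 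Applying the previous scaling argument to $\tilde p_1,\tilde p_2$ (both non-vanishing with the same terminal value) yields a constant proportionality factor which must be $1$, whence $\overline{p}_{1,x}=\overline{p}_{2,x}$ and finally $\overline{p}_1=\overline{p}_2$, contradicting the hypothesis. The delicate technical point is precisely selecting the approximating sequences $(t_k^i,x_k^i)$ so that the corresponding optimal trajectories converge to the prescribed $y$, and this is what I expect to be the main difficulty of the proof.
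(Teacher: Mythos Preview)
Your first two reductions---using (H3) to obtain $p_2(s)=\lambda(s)p_1(s)$ and then (H4) to force $\lambda$ constant---are exactly the paper's argument.

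The divergence is in the final step, where you overcomplicate matters and leave a real gap. The paper closes the argument in one line: since $\phi\in C^1(\mathbb{R}^n)$, both arcs satisfy the transversality condition $-p_i(T)=\nabla\phi(y(T))$, so $p_1(T)=p_2(T)$ and hence $\lambda=1$, contradicting $\overline{p}_{1,x}\neq\overline{p}_{2,x}$. You instead propose to rebuild dual arcs $\tilde p_i$ via the limiting construction of Lemma~\ref{StranoBefore}, hoping to force the approximating optimal trajectories $y_k^i$ to converge to the \emph{prescribed} common trajectory $y$. As you yourself flag, there is no mechanism for this: Lemma~\ref{StranoBefore} produces \emph{some} optimal limit trajectory in $\mathcal{R}(\overline p_i)$, not a preassigned one, so the scaling argument cannot be applied to $\tilde p_1,\tilde p_2$ along a common $y$. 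This is a genuine hole, not a technicality.

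The underlying worry---that the definition of $\mathcal{R}(\overline p)$ via \eqref{NewHamilt}--\eqref{Condition} does not literally impose a terminal condition on $p$---is understandable. But the arcs that \emph{witness} $y\in\mathcal{R}(\overline p_i)$, namely those built in the proof of Lemma~\ref{StranoBefore}, all satisfy $-p(T)=\nabla\phi(y(T))$ by construction (this is established there via $-p_k(T)=\nabla\phi(y_k(T))$ and passage to the limit). So in the contradiction argument you are free to \emph{choose} those particular $p_1,p_2$; with that choice the terminal identity holds, and the paper's one-line conclusion goes through. Replace your last paragraph accordingly.
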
 
\begin{proof}
Suppose to have two elements $\overline{p}_i=(\overline{p}_{i,t},\overline{p}_{i,x})\in \partial^{*}V(t,x)\smallsetminus \lbrace 0 \rbrace,\ i=1,2$, with $\overline{p}_1\neq \overline{p}_2$. Note that the Hamilton-Jacobi equation implies that $\overline{p}_{1,x}\neq \overline{p}_{2,x}$ if and only $\overline{p}_1\neq \overline{p}_2$. Furthermore, suppose that there exist two pairs $(y(\cdot),p_{i}(\cdot)),\ i=1,2$ that are solutions of the system \eqref{NewHamilt} with $p_i(t)=\overline{p}_{i,x}$ and $y(\cdot)$ is optimal for $\mathcal{P}(t,x)$. Then, we get
\[
\dot{y}(s)=\nabla_p H(y(s),p_1(s))=\nabla_p H(y(s),p_2(s))\quad \mbox{ for all } s\in [t,T].
\]
This implies that
$$p_{i}(s) \in N_{F(y(s)) }\left(\dot{y}(s) \right),\ i=1,2\ \mbox{ for all } s\in [t,T]. $$
By (H3), the normal cone $N_{F(y(s)) }\left(\dot{y}(s) \right)$ is a half-line. Recalling also that $p_{i},i=1,2$ never vanish, it follows that there exists $\lambda(s)> 0$ such that $p_2(s)=\lambda(s)p_1(s)$, for every $s\in [t,T]$. The function $\lambda(\cdot)$ is differentiable a.e. on $[t,T]$ because 
\[
\lambda(s)=\frac{\mid p_2(s)\mid }{\mid p_1(s)\mid }.
\]
By \eqref{NewHamilt} and since $\beta \partial_x^- H(x,p)= \partial_x^- H( x,\beta p)$ for each $\beta>0$, it follows that
$$
-\dot{p}_2(s)= -\lambda(s)\dot{p}_1(s) -\dot{\lambda}(s)p_1(s)\in \lambda(s) \partial_x^- H(y(s), p_1(s))\quad \mbox{ for a.e. } s\in [t,T].
$$
Dividing by $\lambda(s)$,
\begin{equation}\label{aggiunta}
-\frac{\dot{\lambda}(s)}{\lambda(s)} p_1(s)\in \partial_x^- H(y(s), p_1(s))- \partial_x^- H(y(s), p_1(s))\quad \mbox{ for a.e. } s\in [t,T].
\end{equation}
Since the ``difference set'' in (H4) is symmetric, (H4) is equivalent to the condition
\begin{equation}\label{H4Nuov}
(\mathbb{R}\smallsetminus \lbrace 0 \rbrace ) \ p \cap \left( \partial_x^- H(x,p)- \partial_x^- H(x,p)\right) = \emptyset \quad \forall p\neq 0.
\end{equation}
From \eqref{aggiunta} and \eqref{H4Nuov} we obtain that $\dot{\lambda}(s)= 0$ a.e. in $[t,T]$. This implies that $\lambda$ is constant and since $\nabla \phi(y(T))=-p_i(T),\ i=1,2$, this constant must be $\lambda=1$. But this yelds $\overline{p}_{1,x}=\overline{p}_{2,x}$, which contradicts the inequality $\overline{p}_1\neq \overline{p}_2$. Hence, we can assert that the functions $y_i(\cdot)$, $i=1, 2,$ are different.
\end{proof}
\begin{remark}
Recall that the assumption $\phi\in C^1(\mathbb{R}^n)$ is essential; see, e.g., Example 7.2.10 in \cite{MR2041617} where the value function is singular at points from which an unique optimal solution starts. Concerning (H4), note that it is verified, for instance, when $x \mapsto H(x,p)$ is differentiable, without any Lipschitz regularity of the map $x\mapsto \nabla_p H(x,p)$.
\end{remark}
Now let us consider the case when $\overline{p}=0\in\partial^{*}V(t,x)$.
\begin{theorem}\label{ZeroTra}
Assume (SH), (H1) and that $\phi\in C^1(\mathbb{R}^n)$, and let $(t,x)\in [t_0,T[\times \mathbb{R}^n$ be such that $0\in \partial^{*}V(t,x)$. Then there exists an optimal trajectory $y:[t,T]\rightarrow \mathbb{R}^n$ for $\mathcal{P}(t,x)$ such that $\nabla \phi (y(T))=0$. Consequently, the unique corresponding dual arc is equal to zero. 
\end{theorem}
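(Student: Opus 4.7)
The plan is to realize the limiting gradient $0\in\partial^*V(t,x)$ along a sequence of differentiability points of $V$, construct corresponding optimal trajectories and dual arcs, and show that these dual arcs collapse to zero in the limit. By definition of $\partial^*V$, I would pick $(t_k,x_k)\to(t,x)$ at which $V$ is differentiable and $\nabla V(t_k,x_k)\to 0$; in particular $t_k<T$ for $k$ large. For each such $k$, Theorem \ref{TheoDualArc2} provides an optimal trajectory $y_k:[t_k,T]\to\mathbb{R}^n$ for $\mathcal{P}(t_k,x_k)$ together with a dual arc $p_k$, and since $\phi\in C^1$, the transversality condition of that theorem reads $-p_k(T)=\nabla\phi(y_k(T))$.

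The key step is then to invoke the partial sensitivity relation in Fr\'echet form (Remark \ref{RelarkPartialSensRelation}), applicable because $-p_k(T)\in\partial^+\phi(y_k(T))$, to conclude $-p_k(s)\in\partial_x^+V(s,y_k(s))$ for every $s\in[t_k,T]$. Evaluating at $s=t_k$ and using the differentiability of $V$ at $(t_k,x_k)$, which forces $\partial_x^+V(t_k,x_k)=\{\nabla_xV(t_k,x_k)\}$, one obtains $-p_k(t_k)=\nabla_xV(t_k,x_k)\to 0$. I would then apply the Gronwall estimate \eqref{Gronwall} of Remark \ref{RemarkDualArc} on a compact set containing all the $y_k([t_k,T])$ (which exists thanks to (SH)(iii) and the boundedness of the $(t_k,x_k)$) to propagate this decay: $\|p_k\|_\infty\le e^{c_K(T-t_k)}|p_k(t_k)|\to 0$. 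In particular $\nabla\phi(y_k(T))=-p_k(T)\to 0$.

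What remains is a compactness and closure argument. Using Filippov's theorem to produce, for each $k$, a trajectory $\tilde y_k:[t,T]\to\mathbb{R}^n$ with $\tilde y_k(t)=x$ whose $C$-distance to $y_k$ on the overlap is controlled by $|x-x_k|+|t-t_k|$, and combining the equi-Lipschitz bounds from (SH) with Ascoli--Arzel\`a, I would extract a subsequence converging uniformly to some $y$ solving the differential inclusion with $y(t)=x$. Continuity of $V$ and the dynamic programming principle then give $\phi(y(T))=\lim_k\phi(y_k(T))=\lim_k V(t_k,x_k)=V(t,x)$, so $y$ is optimal for $\mathcal{P}(t,x)$; continuity of $\nabla\phi$ forces $\nabla\phi(y(T))=0$. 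For the final assertion, any dual arc $p$ associated with $y$ must satisfy $-p(T)=\nabla\phi(y(T))=0$, and the dichotomy of Remark \ref{RemarkDualArc} makes $p$ identically zero.

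The main technical nuisance I anticipate is the variable initial time $t_k$: one has to combine the Filippov correction with the compactness extraction in a way that simultaneously preserves the optimality of the limiting trajectory, the uniform convergence of $y_k(T)$, and the decay $p_k(T)\to 0$ coming from the sensitivity relation. Everything else is essentially linear, with the partial sensitivity relation doing the real work of transporting the information $\nabla_xV(t_k,x_k)\to 0$ forward to the terminal time.
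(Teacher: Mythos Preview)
Your proposal is correct and follows essentially the same approach as the paper: pick a sequence of differentiability points realizing the reachable gradient $0$, use the Fr\'echet partial sensitivity relation (Remark~\ref{RelarkPartialSensRelation}) to identify $-p_k(t_k)=\nabla_x V(t_k,x_k)\to 0$, propagate via the Gronwall estimate \eqref{Gronwall} to get $p_k(T)\to 0$, and pass to the limit via compactness of trajectories. The paper's proof differs only in minor presentation: it invokes a closure theorem for differential inclusions directly (rather than the Filippov correction you describe) to handle the varying initial times, and it reverses the order, first extracting the optimal limit $y(\cdot)$ and then showing $\nabla\phi(y(T))=0$.
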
 
\begin{proof}
Since $0\in \partial^{*}V(t,x)$, we can find a sequence $\{(t_k,x_k)\}$ such that $V$ is differentiable at ${(t_k,x_k)}$ and
\[
\lim_{k\rightarrow \infty} (t_k,x_k)=(t,x), \ \lim_{k\rightarrow \infty} \nabla V(t_k,x_k)=0.
\]
Let $y_k(\cdot)$ be an optimal trajectory for $\mathcal{P}(t_k,x_k)$ and $p_k(\cdot)$ be a dual arc. By Theorem $7.2.2.$ in \cite{MR1048347}, we can assume, after possibly passing to a subsequence, that $y_k(\cdot)$ converges uniformly to $y(\cdot)$ which is a trajectory of our system. Since
\[
\phi (y(T))= 
\lim_{k\rightarrow \infty} \phi (y_k(T))= \lim_{k\rightarrow \infty} V(t_k,x_k)=V(t,x),
\]
it follows that $y$ is optimal for $\mathcal{P}(t,x)$. 
Furthermore, the sensitivity relation in Remark \ref{RelarkPartialSensRelation} holds true and so, recalling that $V$ is differentiable at $(t_k,x_k)$, we have 
\begin{equation}\label{Difzero}
-p_k(t_k)=\nabla_x V(t_k,x_k)\rightarrow 0.
\end{equation}
By \eqref{Difzero}, \eqref{Gronwall} and the Gronwall's inequality, we get that $p_k(T)\rightarrow 0$ when $k \rightarrow \infty$. We conclude that
\[
\nabla \phi (y(T))= \lim_{k\rightarrow \infty} \nabla \phi (y_k(T))= \lim_{k\rightarrow \infty} -p_k(T)=0.
\]
\end{proof}
\begin{remark}
The conclusion of this theorem is weaker than the previous one; it gives only the existence of an optimal trajectory without saying that $y(\cdot)$ is the solution of a system like \eqref{NewHamilt}, because in this case the dual arc associated to $y(\cdot)$ must be null everywhere. 
\end{remark}
\begin{corollary}\label{Ok0}
Assume (SH), (H1), $\phi \in C^1(\mathbb{R}^n)$ and suppose also that $\nabla \phi (x)\neq 0$ for all $x\in\mathbb{R}^n$. Then $0 \not\in \partial^{*}V(t,x)$ for all $(t,x)\in [t_0,T]\times \mathbb{R}^n$.
\end{corollary}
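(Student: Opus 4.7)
The plan is to argue by contradiction, reducing directly to Theorem \ref{ZeroTra}. Suppose, towards a contradiction, that $0 \in \partial^{\ast}V(t,x)$ for some $(t,x)\in[t_0,T]\times\mathbb{R}^n$. The goal is to produce a point $z\in\mathbb{R}^n$ with $\nabla\phi(z)=0$, which contradicts the standing hypothesis.

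For the interior case $t<T$, Theorem \ref{ZeroTra} applies verbatim and hands us an optimal trajectory $y:[t,T]\to\mathbb{R}^n$ for $\mathcal{P}(t,x)$ with $\nabla\phi(y(T))=0$; taking $z=y(T)$ gives the contradiction immediately.

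For the boundary case $t=T$, I would pick a sequence $(t_k,x_k)\to(T,x)$ with $V$ differentiable at $(t_k,x_k)$ and $\nabla V(t_k,x_k)\to 0$, and pass to a subsequence along which $t_k<T$ (the reachable gradient at a boundary point being defined through interior differentiability of $V$). Then I would replay the argument of Theorem \ref{ZeroTra}: for each $k$, pick an optimal $y_k(\cdot)$ for $\mathcal{P}(t_k,x_k)$ together with a dual arc $p_k(\cdot)$ obtained via Theorem \ref{TheoDualArc2}; the partial sensitivity relation of Remark \ref{RelarkPartialSensRelation} yields $-p_k(t_k)=\nabla_x V(t_k,x_k)\to 0$, and the Gronwall-type bound \eqref{Gronwall} from Remark \ref{RemarkDualArc} propagates this to $p_k(T)\to 0$. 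Since $t_k\to T$ and $x_k\to x$, the sublinear growth (SH)(iii) gives $|y_k(T)-x_k|\leq C(T-t_k)\to 0$, so $y_k(T)\to x$. Continuity of $\nabla\phi$ (as $\phi\in C^1$) then delivers $\nabla\phi(x)=-\lim_k p_k(T)=0$, contradicting the hypothesis.

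There is no substantial obstacle here; the whole argument is a short consequence of Theorem \ref{ZeroTra}. The only mild technicality is handling the temporal boundary $t=T$, which is not covered by the statement of Theorem \ref{ZeroTra} but falls to the same sensitivity$+$Gronwall mechanism after noting that (SH)(iii) forces the terminal points $y_k(T)$ to cluster at $x$.
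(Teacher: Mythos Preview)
Your argument is correct and matches the paper's intent: the corollary is stated without proof, as an immediate consequence of Theorem~\ref{ZeroTra}, and your contradiction argument is exactly that. Your extra care with the boundary case $t=T$ (which Theorem~\ref{ZeroTra} excludes) goes beyond what the paper spells out; the only minor quibble is the parenthetical justification for passing to $t_k<T$---rather than invoking a convention about interior differentiability, one can simply note that if eventually $t_k=T$ then $\nabla_x V(T,x_k)=\nabla\phi(x_k)\to\nabla\phi(x)\neq 0$, contradicting $\nabla V(t_k,x_k)\to 0$.
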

The previous theorems imply that, if $(t,x)$ is a singular point of $V$, then $\mathcal{P}(t,x)$ admits more than one optimal trajectory, as proved in the following proposition. 
\begin{proposition}
Assume (SH), (H1), (H3), (H4) and let $\phi\in C^{1}(\mathbb{R}^n) \cap SC (\mathbb{R}^n)$. If $V$ fails to be differentiable at a point $(t,x)$, then there exist two or more optimal trajectory starting from $(t,x)$.
\end{proposition}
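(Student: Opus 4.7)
The plan is to exploit the reachable gradient characterization of differentiability for semiconcave functions, combined with the set-valued strong injectivity established in Theorem~\ref{Strano} and the boundary case covered by Theorem~\ref{ZeroTra}.

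First I would invoke semiconcavity. Under \textbf{(SH)}, \textbf{(H1)} with $\phi\in C^{1}\cap SC(\mathbb{R}^n)$, the value function $V$ is locally semiconcave (the results of \cite{MR2728465} on which our Hamiltonian hypotheses are modeled give exactly this). By Proposition~\ref{PropSemiconc}, $\partial^{+}V(t,x)=co(\partial^{*}V(t,x))$, and the Fr\'echet superdifferential is a singleton if and only if $V$ is differentiable at $(t,x)$. Since by hypothesis $V$ fails to be differentiable at $(t,x)$, the set $\partial^{*}V(t,x)$ must contain at least two distinct elements $\overline{p}_{1}\neq \overline{p}_{2}$.

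Next I would split into two cases according to whether $0$ is among these reachable gradients. If $0\notin\{\overline{p}_{1},\overline{p}_{2}\}$, then both $\overline{p}_{1},\overline{p}_{2}\in \partial^{*}V(t,x)\setminus\{0\}$. Lemma~\ref{StranoBefore} produces optimal trajectories $y_{i}\in\mathcal{R}(\overline{p}_{i})$ for $i=1,2$, and Theorem~\ref{Strano} (which applies thanks to \textbf{(H3)} and \textbf{(H4)}) yields $\mathcal{R}(\overline{p}_{1})\cap\mathcal{R}(\overline{p}_{2})=\emptyset$, hence $y_{1}\neq y_{2}$. This is the straightforward case.

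The delicate case, and the main obstacle, is when one of the reachable gradients is $0$, say $\overline{p}_{1}=0$ and $\overline{p}_{2}\neq 0$, because Theorem~\ref{Strano} requires nonzero gradients. Here I would argue as follows. Applying Theorem~\ref{ZeroTra} to $\overline{p}_{1}=0$ gives an optimal trajectory $y_{0}:[t,T]\to\mathbb{R}^{n}$ for $\mathcal{P}(t,x)$ with $\nabla\phi(y_{0}(T))=0$. On the other hand, Lemma~\ref{StranoBefore} applied to $\overline{p}_{2}$ produces an optimal trajectory $y_{2}\in\mathcal{R}(\overline{p}_{2})$ together with a dual arc $p_{2}(\cdot)$ satisfying $p_{2}(t)=-\overline{p}_{2,x}$; since $\overline{p}_{2}\neq 0$ forces $\overline{p}_{2,x}\neq 0$ (the Hamilton--Jacobi equation together with $H(x,0)=0$ gives this implication, already used in the proof of Lemma~\ref{StranoBefore}), Remark~\ref{RemarkDualArc} implies $p_{2}(\cdot)$ never vanishes. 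In particular $\nabla\phi(y_{2}(T))=-p_{2}(T)\neq 0$. Comparing the terminal gradients shows $y_{0}\neq y_{2}$, producing two distinct optimal trajectories from $(t,x)$.

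In both cases we obtain at least two optimal trajectories for $\mathcal{P}(t,x)$, which proves the proposition.
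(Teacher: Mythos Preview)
Your proposal is correct and follows essentially the same approach as the paper's own proof: semiconcavity of $V$ forces $\partial^{*}V(t,x)$ to contain two distinct elements, the nonzero/nonzero case is handled by Theorem~\ref{Strano}, and the zero/nonzero case is resolved by comparing terminal gradients via Theorem~\ref{ZeroTra} and the construction in Lemma~\ref{StranoBefore}. If anything, your justification in the second case is slightly more precise than the paper's (which attributes $\nabla\phi(y_2(T))\neq 0$ to Theorem~\ref{Strano} rather than to the dual-arc argument you spell out).
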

\begin{proof}
Since $V$ is semiconcave (see \cite{MR2728465}), if it is not differentiable at a point $(t,x)\in [t_0,T]\times\mathbb{R}^n$, then we can find two distinct elements $p_{1},p_{2} \in \partial^{*}V(t,x)$. If $p_{1},p_{2}$ are both nonzero, we can apply Theorem \ref{Strano} to find two distinct optimal trajectories. If one of the two vectors is zero, for instance $p_{1}$, then there exists at least an associated optimal trajectory $y_1(\cdot)$ such that $\nabla\phi (y_1(T))=0$ by Theorem \ref{ZeroTra}, but for any optimal trajectory $y_2(\cdot)$ associated to $p_2$ it holds that $\nabla\phi (y_2(T))\neq 0$ by Theorem \ref{Strano}.
\end{proof}
It might happen that two or more optimal trajectories actually start from a point $(t,x)$ at which $V$ is differentiable. However, if $H\in C^{1,1}_{loc}(\mathbb{R}^n \times (\mathbb{R}^n \smallsetminus \lbrace 0 \rbrace))$, then it is well-know that such a behaviour can only occur  when the gradient of $V$ at $(t,x)$ vanishes (see e.g. Theorem 7.3.14 and Example 7.2.10(iii) in \cite{MR2041617}).  More is true in one space dimension,  as we explain below.
\begin{example} 
In the one dimensional case it is easy to show that, if $V$ is differentiable at some point $(t_0,x_0)$ with  $V_x(t_0,x_0)\neq 0$, then there exists a unique optimal trajectory starting from $(t_0,x_0)$. Indeed, in this case, $F(x)=[f(x),g(x)]$ for suitable  functions  $f,g:\mathbb{R}\rightarrow \mathbb{R}$, with $f\leq g$, such that $-f$ and $g$ are locally semiconvex. So,
\begin{equation}\label{exmp}
H(x,p)=
\left\lbrace
\begin{array}{ll}
f(x)p,& p< 0,\\
g(x)p,& p\geq 0.
\end{array}
\right.
\end{equation}
If $x_0(\cdot)$ is an optimal trajectory at $(t_0,x_0)$ and $p_0(\cdot)$ is a dual arc associated with $x_0(\cdot)$, then by Remark~\ref{RelarkPartialSensRelation} we have that $0\neq V_x(t_0,x_0)=-p_0(t_0)$. Therefore, $0\neq p_0(t)$ for all $t\in [t_0,T]$ by  Remark \ref{RemarkDualArc}.  Thus,   \eqref{exmp} and the Maximum Principle yield
\begin{equation}\label{ODE}
\dot x_0(t)=
\left\lbrace
\begin{array}{ll}
f(x_0(t)),& \mbox{ if } V_x(t_0,x_0)>0,\\
g(x_0(t)),& \mbox{ if } V_x(t_0,x_0)<0.
\end{array}
\right.
\end{equation}
Since $f$ and $g$ are both locally Lipschitz, $x_0(\cdot)$ is the unique solution of \eqref{ODE} satisfying $x(t_0)=x_0$.
\end{example}

\section*{Acknowledgements}

Partial support of this research by the European Commission (FP7-PEOPLE-2010-ITN, Grant Agreement no.\ 264735-SADCO), and by the INdAM National Group GNAMPA is gratefully acknowledged.

\addcontentsline{toc}{chapter}{\bibname}
\printbibliography
\end{document}